\documentclass[11pt]{amsart}
\usepackage{amsfonts,amssymb,amsmath,amsthm}
\usepackage{url}
\usepackage{enumerate}
\usepackage[margin=3.9 cm]{geometry}

\def\s{\mathbb S}
\def\C{\mathbb C}
\def\B{\mathbb B}
\def\C{\mathbb C}

\def\U{\mathbb U}
\def\V{\mathbb V}
\def\W{\mathbb W}
\def\I{{\rm I}}

\def\C{\mathbb C}
\def\h{{\rm H}_{\C}}
\def\R{\mathbb R}
\def\U{\mathbb U}
\def\V{\mathbb V}

\def\P{\mathbb P}

\def\PU{{\rm PU}}
\def \SU{{\rm SU}}
\def \U{{\rm U}}
\def \u{\mathbb U}
\newtheorem{theorem}{Theorem}[section]
\newtheorem{lemma}[theorem]{Lemma}

\theoremstyle{definition}
\newtheorem{definition}[theorem]{Definition}

\theoremstyle{remark}

\numberwithin{equation}{section} \theoremstyle{plain}

\newtheorem{cor}[theorem]{Corollary}

\numberwithin{equation}{section}

\newcommand{\secref}[1]{section~\ref{#1}}
\newcommand{\thmref}[1]{Theorem~\ref{#1}}
\newcommand{\lemref}[1]{Lemma~\ref{#1}}

\newcommand{\corref}[1]{Corollary~\ref{#1}}
\newcommand{\eqnref}[1]{~{\textrm(\ref{#1})}}

\begin{document}
\title[Decomposition of complex hyperbolic isometries by involutions]{Decomposition of complex hyperbolic isometries by involutions}
\author[Krishnendu Gongopadhyay \and Cigole Thomas]{Krishnendu Gongopadhyay \and Cigole Thomas}
\address{Department of Mathematical Sciences, Indian Institute of Science Education and Research (IISER) Mohali, Knowledge City, S.A.S. Nagar, Sector 81, P.O. Manauli 140306, India}
\email{krishnendug@gmail.com, krishnendu@iisermohali.ac.in}

\address{Department of Mathematical Sciences, Indian Institute of Science Education and Research (IISER) Mohali, Knowledge City, S.A.S. Nagar, Sector 81, P.O. Manauli 140306, India}

\address{{\it Current Address:} Department of Mathematical Sciences, George Mason University,
4400 University Drive,
Fairfax, Virginia  22030, USA}
\email{cigolethomas@gmail.com}
\thanks{Gongopadhyay acknowledges NBHM  grant, ref no. NBHM/R.P.7/2013/Fresh/992. }
\thanks{Thomas acknowledges the support of INSPIRE-SHE scholarship.} 
\date{March 7, 2016}
\keywords{complex hyperbolic space, involutions, complex reflection, unitary group}
\subjclass[2000]{Primary 51M10; Secondary 51F25 }

\begin{abstract}
A $k$-reflection of the $n$-dimensional complex hyperbolic space ${\rm H}_{\C}^n$ is an element in ${\rm U}(n,1)$ with negative type eigenvalue $\lambda$, $|\lambda|=1$, of multiplicity $k+1$ and positive type eigenvalue $1$ of multiplicity $n-k$. We prove that a holomorphic isometry of ${\rm H}_{\C}^n$ is a product of at most four involutions and a complex $k$-reflection, $k \leq 2$. Along the way, we  prove that every element in ${\rm SU}(n)$ is a product of four or five involutions according as $n \not \equiv 2 \mod 4$ or $n \equiv 2 \mod 4$. We also give a short proof of the well-known result that every holomorphic isometry of ${\rm H}_{\C}^n$ is a product of two  anti-holomorphic involutions.
\end{abstract}
\maketitle

\section{Introduction}
An element $g$ in a group $G$ is called an involution if $g^2=1$. An element $g$ in $G$ is called \hbox{\emph{reversible}} if $g$ is conjugate to $g^{-1}$. An element that is a product of two involutions is called strongly reversible. The reversible,  strongly reversible elements and their relationship have been investigated in several contexts in the literature, for example see \cite{e1, e2, kn1, kn2,  fr, st1, wo}. In \cite{gp}, Gongopadhyay and Parker classified the reversible and strongly reversible isometries of the $n$-dimensional complex hyperbolic space. Classification of orientation-preserving reversible isometries of the real hyperbolic space was obtained in \cite{g}. A related question is to obtain the minimum number $k$ of involutions that is required to generate an  element $g$ in a group $G$; the number $k$ is called the ``involution length" of $g$. The maximum of all involution lengths over elements of $G$ is the involution length of the group $G$. This question was investigated and settled for orthogonal groups over arbitrary fields by Ellers \cite{e1}, Nielsen \cite{n1} and, Kn\"uppel and Nielsen \cite{kn1}, also see \cite{kt} where the authors have also investigated commutator width of orthogonal transformations. Recently, Basmajian and Maskit \cite{bm} have settled this question for isometries of the space-forms: the Euclidean $n$-space, the $n$-sphere and the real hyperbolic $n$-space, also see \cite{bp} for related work. It is natural to ask for the same question in unitary groups. However, in unitary groups situation is more complicated as there are complex reflections that are not involutions. B\"unger and Kn\"uppel \cite{bn1} have investigated decompositions of unitary transformations. They proved that every unitary transformation over an algebraically closed field is a product of three `quasi-involutions'. 
Djokovi\'c    and Malzan \cite{dm} investigated the problem for unitary groups $\U(p,q)$ over complex numbers and proved that an element $g$ of $\U(p,q)$ with determinant $\pm 1$ is a product of `involutory-reflections'.  An
 involutory-reflection is an involution that fixes every point on a non-degenerate hyperplane of $\C^{p+q}$. They gave a bound of $p+q+4$ for the number of involutory-reflections that is needed to express an element $g$. 

In this paper, our interest is the isometry group $\PU(n,1)$ of the $n$-dimensional complex hyperbolic space $\h^n$.  A \emph{complex $k$-reflection} of $\h^n$ is an elliptic isometry that has an eigenvalue $1$ of multiplicity $n-k$ and an eigenvalue $\lambda$ corresponding to the fixed points on $\h^n$, of multiplicity $k+1$. A {complex reflection} need not be an involution.  It follows from the result of  B\"unger and Kn\"uppel \cite{bn1} that every element in $\PU(n,1)$ is a product of an involution and two elliptic isometries. We prove in this paper that we can take those elliptic isometries as a product of three involutions and a complex $k$-reflection. That is, we prove that every element in $\PU(n,1)$ is a product of at most four involutions and a complex $k$-reflection, $k \leq 2$,  see \thmref{th2} in \secref{chi}. Thus every isometry of $\h^n$ is a product of a {complex $k$-reflection} and two reversible elements. Along the way, we prove that the involution length of  $\SU(n)$ is four or five according as $n \not \equiv 2 \mod 4$ or $n = 2 \mod 4$, see \thmref{th1} in \secref{sun}.  Djokovi\'c and Malzan \cite{dm2} obtained a formula for the involutory-reflection length of an element with determinant $\pm 1$ in $\U(n)$ and established that the involutory-reflection length is $2n-1$. Our result shows that if instead of the family of involutory-reflections, we take the set of all involutions as the generating set, then the involution length of $\SU(n)$ is essentially independent of $n$ and can be improved further to four or five. 

We have learned that Julien Paupert and Pierre Will \cite{pw2} are investigating involution length in $\PU(n,1)$ and it seems to them that the involution length of $\PU(2,1)$ is $4$.  As a consequence of the work in this paper,  the problem of finding involution length in $\PU(n,1)$ is now closely related to the problem of finding involution length of $k$-reflections, $k \leq 2$, also see \lemref{invo} where it has been observed that the involution length is also closely related to the ``Hermitian length" of an element. 

Finally, in \secref{choi}, we give a short proof of a well-known result by Choi \cite{choi} that states that every holomorphic isometry of $\h^n$ is a product of two anti-holomorphic involutions. Choi's original proof is not available in literature and the result for $\PU(2,1)$ was proved by Falbel and Zocca \cite{fz} using a different argument. This result is a starting point of the recent investigation of Paupert and Will on ``linked pairs by real reflections" \cite{pw1}.

\section{Preliminaries} 
All the assertions made in this section are borrowed essentially from \cite{chen}. 

\subsection{The Complex Hyperbolic Space} Let $\V = \C^{n+1}$ be a vector space of dimension $(n+1)$ over $\C$ equipped with the complex Hermitian form of \emph{signature} $(n,1)$,
$$
\langle  z,w \rangle  = {\overline w}^t J z=-z_0\overline{w}_0 + z_1 \overline{w}_1 + \cdots  + 
z_n \overline{w}_n,
$$
where $z$ and $w$ are the column vectors in $\V$ with entries
$z_0,\ldots,z_{n}$ and $w_0,\ldots,w_{n}$ respectively and $J$ is the diagonal matrix $J=diag(-1, 1, \ldots, 1)$ representing the Hermitian form. We denote $\V$ by $\C^{n,1}$.  Define
$$
\V_0=\{z \in \V \ |\;\langle  z,z \rangle  =0\}, \ \V_+=\{z \in \V \ |\;\langle  z,z \rangle >  0\},\ \V_-=\{z \in \V \ |\;\langle z,z\rangle< 0\}.
$$
A vector $v$ is called \emph{time-like}, \emph{space-like} or \emph{light-like} according as $v$ is an element in $\V_-$, $\V_+$ or $\V_0$. 
Let $\P(\V)$ be the projective space obtained from $\V$, i.e.,
$\P(\V)=\V-\{0\}/\sim$, where $u \sim v$ if there exists $\lambda$
in $\C^{\ast}$ such that $u=v \lambda$, and $\P(\V)$ is equipped
with the quotient topology. Let $\pi: \V-\{0\}\to \P(\V)$ denote the projection map. We define $\h^n=\pi(\V_-)$. The boundary $\partial \h^n$ in $\P(\V)$ is
$\pi(\V_0-\{0\})$.  The unitary group ${\rm U}(n,1)$ of the
Hermitian space $\V$ acts by the holomorphic isometries of $\h^n$. 

A matrix $A$ in ${\rm GL}(n+1,{\mathbb C})$ is \emph{unitary} with respect to the Hermitian form $\langle
z,w\rangle$ if
$\langle Az,Aw\rangle=\langle z,w\rangle$ for all $z,\,w\in{\mathbb V}$.
Let ${\rm U}(n,1)$ denote the group of all matrices that are unitary
with respect to our Hermitian form of signature $(n,1)$. By letting
$z$ and $w$ vary through a basis of ${\mathbb V}$ we can characterize
${\rm U}(n,1)$  by
$${\rm U}(n,1)= \{A \in {\rm GL}(n+1, \C) \ : \  {\bar A}^t J A = J \}.$$
The group of isometries of $\h^n$  is ${\rm PU}(n,1)={\rm U}(n,1)/Z({\rm U}(n,1)),$ where
the center $Z({\rm U}(n,1))$ can be identified with the circle group $\s^1=\{\lambda I \ |  \ |\lambda|=1 \}$.  Let { ${\rm O}(n,1)={\rm GL}(n+1, {\mathbb \R})\cap{\rm U}(n,1)$}. Then ${\rm PO}(n,1)$ is the isometry group of the real hyperbolic $n$-space that is embedded inside $\h^n$.

Thus an isometry $T$ of  $\h^n$ lifts to a unitary transformation $\tilde{T}$ in ${\rm U}(n,1)$ and  
the fixed points of $T$ correspond to eigenvectors of
$\tilde{T}$. For our purpose, it is convenient to deal with ${\rm U}(n,1)$ rather than ${\rm PU}(n,1)$. We shall regard ${\rm U}(n,1)$ as acting on $\h^n$ as well as on $\V$. 

A subspace $\W$ of $\V$ is called  \emph{space-like}, \emph{light-like}, or \emph{ indefinite} if the Hermitian form restricted to $\W$ is positive-definite, degenerate, or non-degenerate but indefinite respectively. If $\W$ is an indefinite subspace of $\V$, then the orthogonal complement $\W^{\perp}$  is space-like. 

\medskip The \emph{ball model} of $\h^n$ is obtained by taking the representatives of the homogeneous  coordinate  $Z=[(1, z_1, \ldots, z_n)]$ in $\pi(\V)$. The vector $(1, z_1, \ldots, z_n)$ is the \emph{standard lift} of ${z} \in \h^n$ to $\V_-$. Further we see that ${ z} \in \h^n$ provided
$$\langle Z, Z \rangle= -1+|z_1|^2 + \cdots +|z_n|^2<0,$$
i.e. $|z_1|^2 + \cdots +|z_n|^2<1$. Thus $\pi(\V_-)$ can be identified with the unit ball 
$$\B^n=\{(z_1, \cdots, z_n) \in \C^n \ : \ |z_1|^2 + \cdots + |z_n|^2<1 \}.$$
This identifies the boundary $\partial \h^n$ with the \emph{ unit sphere}
$$\s^{2n-1}=\{(z_1, \cdots, z_n) \in \C^n \ : \ |z_1|^2 + \cdots + |z_n|^2=1 \}.$$

 In the ball model of the hyperbolic space, by Brouwer's fixed point theorem it follows 
that every isometry $T$ has a fixed point on the closure $\overline{\h^n}$. An isometry $T$ is called \emph{elliptic} if it has a fixed point in $\h^n$;
it is called \emph{parabolic} if it fixes a single point and this point lies 
in $\partial\h^n$; it is called \emph{hyperbolic} (or \emph{loxodromic}) if 
it fixes exactly two points and they both lie on $\partial \h^n$. Any
non-central element $T$ of ${\rm U}(n,1)$ must be one of the above three 
types; see \cite{chen}. 

\subsection{Conjugacy classification of isometries}
    It follows from the conjugacy classification in ${\rm U}(n,1)$, see \cite[Theorem 3.4.1]{chen},  that the elliptic and hyperbolic elements are semisimple, i.e. their minimal polynomial is a product of linear factors. The parabolic elements are not semisimple. A parabolic transformation $T$ has the unique Jordan decomposition $T=AN$, where  $A$ is elliptic, $N$ is unipotent and $AN=NA$. 

\begin{definition} 
 An eigenvalue $\lambda$ of $T \in {\rm U}(n,1)$ is said to be of \emph{negative type}, of \emph{positive type}  if every eigenvector in $\V_\lambda$ is in $\V_-$ or $\V_+$ respectively. The eigenvalue $\lambda$ is called \emph{null} if the $\lambda$-eigenspace $\V_{\lambda}$ is light-like. The eigenvalue $\lambda$ is said to be of \emph{indefinite type} if $\V_\lambda$ contains vectors in $\V_-$ and
vectors in $\V_+$. Moreover, for $\lambda$ of indefinite type,  the restriction of the Hermitian form to $\V_{\lambda}$ has signature $(r,1)$, $1 \leq r \leq n$,  where ${\rm \dim } \  \V_{\lambda}=r+1$. 
\end{definition} 

Let $T$ be elliptic. From the conjugacy classification it follows that all eigenvalues of $T$ except for one are of positive type and the remaining eigenvalue is either of negative type or of indefinite type. Moreover, all eigenvalues of $T$ have modulus $1$. 

Suppose $T$ is hyperbolic. Then it has a pair of null eigenvalues $re^{ i \theta}$, $r^{-1} e^{ i \theta}$, $r>1$,  and the eigenspace of each such eigenvalue has dimension one.  The other eigenvalues are of positive type and they all have modulus one.  

Suppose $T$ is parabolic. If $T$ is unipotent, i.e.
all the eigenvalues are $1$, then it has minimal polynomial $(x-1)^2$, or $(x-1)^3$ and, accordingly $T$ is called \emph{vertical} or \emph{non-vertical translation}. 

 If $T$ is a non-unipotent parabolic, then it has the Jordan
decomposition $T=AN$ as above. In this case $T$ has a null eigenvalue $\lambda$, $|\lambda|=1$,  and the minimal polynomial of $T$ contains a factor of the form $(x-\lambda)^2$ or $(x-\lambda)^3$. This implies that $\C^{n,1}$ has a $T$-invariant orthogonal decomposition 
\begin{equation}\label{pe} \C^{n,1}=\u \oplus \W, \end{equation} where $T|_{\W}$ is semisimple, $\u$ is indefinite, $\dim \u=k$ with $k=2$ or $3$ and $T|_{\u}$ has characteristic, as well as minimal polynomial $(x-\lambda)^k$. If $k=2$, $T$ is called a \emph{ellipto-translation} and for $k=3$, $T$ is called a \emph{ellipto-parabolic}. Without loss of generality, we can assume, $T|_{\W}$ is an element in $\U(n-k+1)$ by identifying $\U(\langle, \rangle|_{\W})$ with $\U(n-k+1)$. 

We note here that  $\oplus$ will always denote the orthogonal sum of subspaces. The direct sum is denoted by $+$. 
\subsection{Complex Reflections} We slightly generalize the notion of a complex reflection.  An element $T$ of $\U(n,1)$ is called a \emph{complex $k$-reflection} if it has a negative eigenvalue $\lambda$ of multiplicity $k+1$ and $n-k$ eigenvalues $1$.  In the ball model of $\h^n$, a complex $0$-reflection is simply a transformation of the form $Z \mapsto \lambda Z$, $|\lambda|=1$. A $0$-reflection is called a \emph{complex rotation} of $\h^n$. A complex $k$-reflection pointwise fixes a $k$-dimensional totally geodesic subspace $S$ of $\h^n$ and acts as a rotation in the co-dimension $k$ orthogonal complement of $S$. A $1$-reflection is called a \emph{complex line-reflection} and a $2$-reflection is called a \emph{complex plane-reflection}. 

Note that usually what is called a complex reflection in the literature, is our $(n-1)$-reflection. For more details on this kind of reflections and their triangle group see the survey by Parker \cite{p}. 
\subsection{Involutions in $\U(n,1)$} In this section we give a characterization of involutions in $\U(n,1)$. Though we will not use it anywhere in the sequel,  the lemma is of independent interest. This relates the problem of finding the involution length in $\PU(n,1)$ to the problem  of expressing every element in $\PU(n,1)$ as a product of Hermitian matrices. 
\begin{lemma}\label{invo}
An element $A\in \U(n,1)$ is an involution iff $A=HJ$ where $H\in \U(n,1)$ is  Hermitian and $J=\hbox{diag}(-1, 1, \cdots, 1)$ is the matrix corresponding to the Hermitian form on $\mathbb{C}^{n,1}$.
\end{lemma}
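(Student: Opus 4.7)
The natural approach is direct algebraic verification: guess $H = AJ$ and check it satisfies the required properties. Since $J^2 = I$, the relation $A = HJ$ forces $H = AJ$, so there is essentially no choice to make.

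For the \emph{backward} direction, I would start with $A = HJ$ where $H \in \U(n,1)$ is Hermitian. The two hypotheses on $H$ combine into the single identity $HJH = J$ (using $H^* = H$ in the unitarity relation $H^*JH = J$). Then $A^2 = HJHJ = J \cdot J = I$, so $A$ is an involution. A nearly identical computation gives $A^*JA = J H^* J H J = J(HJH)J = J^3 = J$, so $A \in \U(n,1)$ as required.

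For the \emph{forward} direction, I would set $H := AJ$ and verify the two required properties. The crucial step is the identity $A^* = JAJ$, which follows from rearranging $A^*JA = J$ into $A^* = JA^{-1}J$ and then using $A^{-1} = A$. This immediately yields Hermiticity:
\[
H^* = (AJ)^* = J^*A^* = JA^* = J(JAJ) = AJ = H.
\]
Unitarity follows similarly:
\[
H^*JH = JA^*JAJ = J \cdot J \cdot J = J,
\]
where the middle equality uses $A \in \U(n,1)$. Finally, $HJ = AJ \cdot J = A$ recovers $A$.

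There is no real obstacle; the content of the lemma is the observation that $J$ itself is a Hermitian element of $\U(n,1)$ (indeed $J$ is the involution representing the Hermitian form), so the decomposition $A = HJ$ is really the statement that the set of involutions in $\U(n,1)$ is the coset $\{\text{Hermitian elements of }\U(n,1)\}\cdot J$. Once one writes down the candidate $H = AJ$, all identities follow from $J^2 = I$, $J^* = J$, and the defining relation $A^*JA = J$.
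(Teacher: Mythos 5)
Your proof is correct and follows essentially the same route as the paper: in the forward direction the paper takes $H = J\bar{A}^{t}$, which by your identity $A^{*}=JAJ$ (valid for an involution in $\U(n,1)$) is exactly your candidate $H=AJ$, and the backward direction is the same one-line computation $HJHJ=I$. Your explicit check that $H=AJ$ itself lies in $\U(n,1)$ is a small additional verification the paper leaves implicit, but otherwise the arguments coincide.
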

\begin{proof}
Let $A\in \U(n,1)$ be an involution. Then $A=A^{-1}$ and it follows from $AJ\bar{A}^t=J$ that $J\bar{A}^t = AJ$. As $\overline{(J\bar{A}^t)}^t=AJ$, it follows that $J\bar{A}^t$ is hermitian. Hence, $A=HJ$ where $H=J\bar{A}^t$.

Conversely, let $A=HJ$ where $H \in \U(n,1)$ is Hermitian. Then $A^2= HJHJ=HJ\bar{H}^tJ=HH^{-1}=I$.
\end{proof}
In particular it follows that: 
\begin{cor}
If $A$ is Hermitian in $\U(n,1)$, then it is strongly reversible. In particular, every Hermitian element in $\U(n,1)$ is reversible. 
\end{cor}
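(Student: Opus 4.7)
The plan is to exploit the preceding lemma almost directly, since the matrix $J$ itself is an involution in $\U(n,1)$. First I would observe that $J=\mathrm{diag}(-1,1,\ldots,1)$ satisfies $J^2=I$ and $\bar J^t J J = J$, so $J$ lies in $\U(n,1)$ and is itself an involution. Hence, for any Hermitian $A\in\U(n,1)$, one has the trivial factorization
\begin{equation*}
A \;=\; (AJ)\cdot J,
\end{equation*}
so the task reduces to showing the first factor $AJ$ is an involution.

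Next I would invoke the converse direction of the lemma: if $H\in\U(n,1)$ is Hermitian, then $HJ$ is an involution. Applying this with $H=A$ gives $(AJ)^2=I$ immediately, so $AJ$ is an involution. Therefore $A$ is exhibited as a product of two involutions, i.e.\ strongly reversible. There is essentially no obstacle here; the content is packaged in the lemma, and the corollary is a one-line unpacking once one recognises that $J$ plays the role of a canonical second involution.

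Finally, for the ``in particular'' clause, I would use the standard fact that strong reversibility implies reversibility: if $A=I_1 I_2$ with $I_1^2=I_2^2=I$, then
\begin{equation*}
I_1 A I_1^{-1} \;=\; I_1(I_1 I_2)I_1 \;=\; I_2 I_1 \;=\; (I_1 I_2)^{-1} \;=\; A^{-1},
\end{equation*}
so $A$ is conjugate to $A^{-1}$. Taking $I_1=AJ$ and $I_2=J$ as above yields the conjugator $AJ$ explicitly, completing the argument.
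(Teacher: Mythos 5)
Your proof is correct and follows essentially the same route as the paper: write the Hermitian element as $(AJ)\cdot J$, note that $AJ$ is an involution by the converse direction of the lemma and that $J$ is itself an involution in $\U(n,1)$, and conclude strong reversibility (hence reversibility). You spell out the routine verifications ($J\in\U(n,1)$, the conjugation $I_1 A I_1^{-1}=A^{-1}$) that the paper leaves implicit, but the argument is the same.
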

\begin{proof}
As $HJ=A$ is an involution, we have $H=AJ$ as a product of two involutions in $\U(n,1)$. Hence it is strongly reversible.  
\end{proof}

\section{Product of involutions in $\SU(n)$} \label{sun}
In this section we prove the following theorem.
\begin{theorem} \label{th1} Let $n>2$.  If $n \not \equiv 2 \mod 4$, an unitary transformation in $\SU(n)$ is a product of at most four involutions. If $n \equiv 2 \mod 4$, then every element in $\SU(n)$ is a product of at most five involutions. 

That is,  the involution length of $\SU(n)$ is four, resp. five, if $n \not \equiv 2 \mod 4$, resp. $n \equiv 2 \mod 4$. 
\end{theorem}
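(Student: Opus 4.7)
The plan is to factor an arbitrary $A \in \SU(n)$ as a product $A = B_1 B_2$ with each $B_i$ strongly reversible in $\U(n)$. Since any strongly reversible element of $\U(n)$ is a product of two involutions, this yields a decomposition of $A$ into four involutions. The expectation is that in the $n \equiv 2 \pmod{4}$ regime a parity obstruction forces one to pre-multiply by an auxiliary involution, producing the extra factor.

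Because involution length is conjugation-invariant, I would first diagonalize $A$ unitarily so that $A = \operatorname{diag}(e^{i\theta_1}, \dots, e^{i\theta_n})$ with $\sum_j \theta_j \equiv 0 \pmod{2\pi}$. I would then seek real numbers $\phi_1,\dots,\phi_n$ such that $B_1 := \operatorname{diag}(e^{i\phi_j})$ and $B_2 := B_1^{-1}A = \operatorname{diag}(e^{i(\theta_j-\phi_j)})$ both have spectrum closed under conjugation $\lambda \mapsto \bar\lambda$; for diagonal unitaries this is equivalent to being conjugate to the inverse, hence to strong reversibility. Encoding the conjugation-symmetry of the two spectra by involutions $\sigma,\tau \in S_n$ yields a system $\phi_{\sigma(j)} = -\phi_j$ and $\theta_{\tau(j)} - \phi_{\tau(j)} = -(\theta_j - \phi_j)$. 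A natural choice is $\sigma = (1,2)(3,4)\cdots$ paired with the shifted $\tau = (2,3)(4,5)\cdots$, taking $1$ and $n$ as fixed points of $\tau$ when they exist; the resulting relations for the $\phi_j$ telescope, and their sole consistency condition reduces modulo $\pi$ to $\sum_j \theta_j \equiv 0 \pmod{2\pi}$, which is precisely the hypothesis $A \in \SU(n)$.

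Once the $\phi_j$ are chosen, the four involutions can be written down explicitly: if $P \in \U(n)$ is an involution with $PBP^{-1} = B^{-1}$, then $B = P \cdot (PB)$ expresses $B$ as a product of two involutions, since $(PB)^2 = P (PB^{-1}) B = I$. Taking $P = P_\sigma$ for $B_1$ and $P = P_\tau$ for $B_2$ produces four involutions in $\U(n)$ whose determinants multiply to $\det A = 1$, completing the case $n \not\equiv 2 \pmod{4}$.

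The main obstacle will be the refinement needed for $n \equiv 2 \pmod{4}$. In that case $n/2$ is odd, so $P_\sigma$ has $\operatorname{sgn}(\sigma) = -1$, and a finer analysis reveals that some classes of $A$ resist a direct four-involution decomposition along the lines above. The expected workaround is to pre-multiply $A$ by an explicit involution $J \in \U(n)$, for instance a suitable complex reflection, so that $JA$ is amenable to the four-involution procedure, giving $A = J \cdot I_1I_2I_3I_4$ as a product of five involutions. Pinning down the correct $J$ and verifying that this combinatorial adjustment works for every $A \in \SU(n)$ with $n \equiv 2 \pmod{4}$ is the most delicate point of the argument.
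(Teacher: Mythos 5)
Your factorization $A=B_1B_2$ into two diagonal unitaries whose spectra are closed under complex conjugation, built from the interlaced pairings $\sigma=(1\,2)(3\,4)\cdots$ and $\tau=(2\,3)(4\,5)\cdots$, is essentially the paper's Lemma~\ref{lem3}: the telescoping solution of your system is exactly the pair of matrices (\ref{r3}) and (\ref{r4}). The gap lies in the passage from these reversible factors to involutions. The theorem is about the involution length of $\SU(n)$, so each involution must itself lie in $\SU(n)$, i.e.\ have determinant $1$; your check that ``the determinants multiply to $\det A=1$'' is not the relevant condition, and with your explicit choices it genuinely fails. A permutation involution made of $k$ transpositions has determinant $(-1)^k$, so for $n\equiv 0\pmod 4$ the matrix $P_\tau$ (with $(n-2)/2$ transpositions, an odd number) and hence also $P_\tau B_2$ have determinant $-1$, and for $n\equiv 3\pmod 4$ all four of your factors $P_\sigma$, $P_\sigma B_1$, $P_\tau$, $P_\tau B_2$ have determinant $-1$ (their product is $1$, but none lies in $\SU(n)$). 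Repairing this forces you to exploit the eigenvalues $\pm1$ of the factor at the fixed points of the pairing (note your $B_2$, like $R_2$ in the paper, always has eigenvalue $1$) to twist the reversing involution by a sign; this determinant bookkeeping is precisely the content of Lemma~\ref{lem2} and of the results cited in Lemma~\ref{lem1}, and it is where the mod-$4$ dichotomy actually lives. Your write-up does not carry it out, so even the four-involution case is only established with involutions in $\U(n)$.

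More seriously, the case $n\equiv 2\pmod 4$ --- the only case in which the answer differs from four, hence the real content of the dichotomy --- is not proved: you state an ``expected workaround'' of premultiplying by an auxiliary involution $J$ and concede that finding $J$ is the delicate point. As stated, that trick does not obviously help: if all five factors are to be involutions in $\SU(n)$, then $JA$ faces the same obstruction as $A$ unless you first identify which reversible elements of $\SU(n)$ fail to be products of two $\SU(n)$-involutions (for $n=4m+2$ these are exactly the ones without eigenvalue $\pm1$). The paper's resolution is the missing idea: such an element is a product of two involutions of determinant $-1$ (Lemma~\ref{lem2}), and because the number $2m+1$ of two-dimensional blocks is odd one can redistribute the signs among the blocks to rewrite it as a product of three involutions of determinant $+1$ (Lemma~\ref{lem4}); since the second factor $R_2$ always has eigenvalue $1$ and therefore needs only two $\SU(n)$-involutions, the total is five. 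Without an argument of this kind, or a fully worked-out version of your $J$-device, the $n\equiv 2\pmod 4$ bound remains unproved.
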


The proof of the theorem will follow from the following lemmas. 
\begin{lemma} \label{lem1}\cite{e2, gp} Let $n \not \equiv 2 \mod 4$. An element $T \in \SU(n)$ is reversible if and only if it is a product of two involutions. 
\end{lemma}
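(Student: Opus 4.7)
The easy direction is routine: if $T=\sigma\tau$ with $\sigma,\tau$ involutions in $\SU(n)$, then
\[
\sigma T\sigma^{-1}=\sigma(\sigma\tau)\sigma=\tau\sigma=(\sigma\tau)^{-1}=T^{-1},
\]
so $T$ is reversible, the conjugator $\sigma$ itself lying in $\SU(n)$.

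For the converse, suppose $T\in\SU(n)$ is reversible. Since $\SU(n)$ is compact, $T$ is unitarily diagonalisable, and reversibility forces the spectrum of $T$ to be invariant under $\lambda\mapsto\bar\lambda$. Thus the spectrum consists of $k$ non-real conjugate pairs $\{\lambda_j,\bar\lambda_j\}$, the eigenvalue $1$ with multiplicity $m_+$, and the eigenvalue $-1$ with multiplicity $m_-$; the condition $\det T=1$ forces $m_-$ to be even. Choose an orthonormal eigenbasis compatible with this decomposition; the plan is to build involutions $\sigma,\tau\in\SU(n)$ with $T=\sigma\tau$ as block-diagonal matrices in this basis.

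Three block types do all the work. On a $2$-plane carrying a non-real conjugate pair, use the anti-diagonal factorisation
\[
\begin{pmatrix}\lambda&0\\0&\bar\lambda\end{pmatrix}=\begin{pmatrix}0&1\\1&0\end{pmatrix}\begin{pmatrix}0&\bar\lambda\\\lambda&0\end{pmatrix},
\]
whose two factors are involutions in $\U(2)$ of determinant $-1$. On a $2$-plane of $(-1)$-eigenvectors, $-I_2$ admits either this anti-diagonal factorisation (each factor of determinant $-1$) or the split factorisation $-I_2=I_2\cdot(-I_2)$ (each factor of determinant $+1$). On a $3$-space spanned by eigenvectors for $\{\lambda,\bar\lambda,1\}$ with $\lambda\notin\R$, the identity
\[
\operatorname{diag}(\lambda,\bar\lambda,1)=\begin{pmatrix}0&1&0\\1&0&0\\0&0&-1\end{pmatrix}\begin{pmatrix}0&\bar\lambda&0\\\lambda&0&0\\0&0&-1\end{pmatrix}
\]
furnishes two involutions in $\U(3)$, each of determinant $+1$.

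After assembling the blocks, the common determinant of $\sigma$ and $\tau$ works out to $(-1)^{k+a}$, where $a$ counts the $(-1)$-pairs treated by the anti-diagonal factorisation. So I must arrange $k+a$ to be even. If $k$ is even, take $a=0$. If $k$ is odd and $m_-\ge 2$, take $a=1$. If $k$ is odd, $m_-=0$, and $m_+\ge 1$, absorb one conjugate pair together with one $+1$-eigenvector into a $3\times 3$ block of the third type; this effectively replaces $k$ by $k-1$ and restores the parity. The only remaining situation is $k$ odd with $m_+=m_-=0$, which forces $n=2k\equiv 2\pmod 4$, contrary to the hypothesis. In every other case the construction yields $\sigma,\tau\in\SU(n)$ as required.

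The main obstacle is exactly this parity bookkeeping: the anti-diagonal factorisation of a non-real conjugate pair inevitably contributes a determinant $-1$ to both involutions in $\U(2)$, and the crux of the argument is showing that when $n\not\equiv 2\pmod 4$ there is always a spare $+1$-eigenvector or a spare $(-1)$-pair available to absorb the resulting sign --- a counting exercise that pins down precisely why the residue class $n\equiv 2\pmod 4$ is the obstruction.
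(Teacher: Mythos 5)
Your proof is correct. Note, though, that the paper does not prove this lemma at all: it is quoted with a citation to Ellers \cite{e2} and Gongopadhyay--Parker \cite{gp}, so there is no in-paper argument to compare against line by line. What you supply is a self-contained, elementary replacement for that citation: unitary diagonalisability plus conjugation-symmetry of the spectrum, explicit $2\times 2$ and $3\times 3$ unitary involution factorisations (all of which check out, including the determinant counts), and a parity bookkeeping argument showing that when $n\not\equiv 2\pmod 4$ the sign contributed by the anti-diagonal factorisation of the non-real pairs can always be absorbed by a $(-1)$-pair or by a spare $+1$-eigenvector via the $3\times 3$ block. This is essentially the same mechanism as in the cited sources, and it has the added virtue of making transparent exactly why $k$ odd with $m_+=m_-=0$, i.e.\ $n\equiv 2\pmod 4$ with no eigenvalue $\pm 1$, is the only obstruction --- which is consistent with how the paper handles that residual case separately in Lemma 3.3 (involutions of determinant $-1$ in $\U(n)$) and Lemma 3.5 (three involutions in $\SU(n)$). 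The only points worth stating explicitly are the trivial ones you left implicit: on the unused $+1$-eigenspace both factors are taken to be the identity, on a $(-1)$-pair treated by the split factorisation the factors are $I_2$ and $-I_2$, and the block-diagonal involutions built in the orthonormal eigenbasis are conjugated back by the diagonalising unitary, which preserves both unitarity and determinant.
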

\begin{lemma} \label{lem2} 
If $n \equiv 2 \mod 4$, then a reversible element $T$ in $\SU(n)$ that has no eigenvalue $\pm 1$, can be written as a product $T=J_1 J_2$, where $J_1$ and $J_2$ are involutions in $\U(n)$, each of determinant $-1$. If $T$ has an eigenvalue $\pm 1$, it can be written as a product of two involutions in $\SU(n)$. 
\end{lemma}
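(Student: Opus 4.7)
The plan is to use the spectral decomposition of $T$ to reduce the problem to $2\times 2$ blocks and $\pm 1$-eigenspaces, factor each block as a product of two involutions, and then choose signs on the scalar blocks so that the total determinants come out correctly.

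Since $T\in\SU(n)$ is unitary and hence diagonalizable on $\C^n$, and since reversibility pairs each eigenvalue $\lambda\neq\pm 1$ with $\bar\lambda=\lambda^{-1}$ of equal multiplicity, there is an orthogonal $T$-invariant decomposition
$$\C^n \;=\; V_+\oplus V_-\oplus\bigoplus_{j=1}^{p}W_j,$$
where $V_\pm$ are the $\pm 1$-eigenspaces (of dimensions $k$ and $m$) and each $W_j$ is a $T$-invariant $2$-plane on which $T$ acts as $\operatorname{diag}(\lambda_j,\bar\lambda_j)$ with $\lambda_j\neq\pm 1$. The key local identity, in the eigenbasis of $W_j$, is
$$\begin{pmatrix}\lambda_j & 0 \\ 0 & \bar\lambda_j\end{pmatrix} \;=\; \begin{pmatrix}0 & \lambda_j \\ \bar\lambda_j & 0\end{pmatrix}\begin{pmatrix}0 & 1 \\ 1 & 0\end{pmatrix},$$
and both factors on the right are involutions in $\U(2)$ of determinant $-1$. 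Taking the orthogonal direct sum of these factorizations over the $p$ blocks contributes a factor of $(-1)^p$ to the determinant of each assembled involution.

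For the first assertion, $V_+=V_-=0$ and $p=n/2$; since $n\equiv 2\pmod{4}$, $p$ is odd, so the two resulting involutions $J_1, J_2\in\U(n)$ each have determinant $-1$, as claimed. For the second assertion, $\det T=1$ forces $m$ to be even, and then $k=n-m-2p$ is also even because $n$ is even, so whichever of $V_+, V_-$ is non-zero has dimension at least $2$. On $V_+$ we write $I_k = J\cdot J$ for any involution $J$ on $V_+$, and on $V_-$ we write $-I_m = J'\cdot(-J')$ for any involution $J'$ on $V_-$; in both factorizations the two involutions have equal determinant (the equality on $V_-$ using that $m$ is even), and since the block has dimension at least $2$, that common determinant can be freely chosen in $\{\pm 1\}$. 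Choosing this sign equal to $(-1)^p$ on the non-trivial scalar block gives $J_1, J_2\in\SU(n)$ with $T=J_1J_2$.

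The main obstacle is the determinant bookkeeping: checking that in both scalar factorizations the two involutions come out with identical determinant (so a single sign choice corrects $\det J_1$ and $\det J_2$ simultaneously), and that the parity constraints force the non-trivial $\pm 1$-eigenspace to have even dimension at least $2$, which is precisely what permits prescribing that sign as either element of $\{\pm 1\}$.
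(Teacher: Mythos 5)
Your proof is correct and follows essentially the same route as the paper: decompose $\C^n$ into $T$-invariant $2$-planes together with the $\pm 1$-eigenspaces, factor each $2$-plane block as the two swap involutions of determinant $-1$, and use the parity coming from $n\equiv 2\pmod 4$ (respectively the evenness of the $\pm1$-eigenspace dimensions) to control the determinants of the assembled involutions. If anything, your adaptive choice of the sign $(-1)^p$ on a non-trivial $\pm1$-eigenspace in the second case is more careful than the paper's fixed block formula, which as written does not produce determinant $1$ in every configuration.
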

\begin{proof}
Let $n=4m+2$. 
If $T \in \SU(n)$ be reversible. Then if $\lambda$ is a root, so is $\lambda^{-1}$ with the same multiplicity. Thus we can decompose $\C^n$ into two-dimensional subspaces
\begin{equation} \label{eq1} \C^n=\W_1 \oplus \W_2 \oplus \cdots \oplus \W_{2m+1},\end{equation} 
where each $\W_i$ has an orthonormal basis $w_{i1}, w_{i2}$ such that $T(w_{i1})=\lambda w_{i1}$ and $T(w_{i2})=\lambda^{-1} w_{i2}$. Define $J_1$ and $J_2$ such that their restrictions on $\W_i$ is given by
$$J_{i1}(w_{i1})=\lambda w_{i2}, ~ J_{i1}(w_{i2})=\lambda^{-1} w_{i1}; ~~~ J_{i2}(w_{i1})= w_{i2}, ~ J_{i2}(w_{i2})= w_{i1}.$$
Then for each $i=1,2, \ldots,2m+1$, $J_{i1}$ and $J_{i2}$ are involutions each with determinant $-1$. 
Let $J_1=J_{11} \oplus \cdots \oplus J_{(2m+1)1}$ and $J_2=J_{12} \oplus \cdots \oplus J_{(2m+1)2}$. 
Then $T=J_2 J_1$ and $\det J_1=-1=\det J_2$, $J_1^2=I=J_2^2$. 

If $T$ has an eigenvalue $\pm 1$, then $\C^n$ has a $T$-invariant orthogonal decomposition
$$\C^n=\u_1 \oplus \u_{-1} \oplus \W,$$
where  $\dim \u_{-1}$ is even, say $2l$, $T|_{\u_{-1}}=-1_{2l}$;  $\dim \u_1=k$, $T|_{\u_1}=1_k$ and,  $T|_{\W}$ has no eigenvalue $\pm 1$. By the above method, $T|_{\W}=j_1 j_2$ for involutions $j_1$, $j_2$ on $\W$. 
Define $J_1=-1 \oplus 1_{k-1} \oplus -1_{2l}\oplus j_1$, $J_2=-1 \oplus 1_{2l+k-1} \oplus j_2$. Then $J_1$ and $J_2$ are involutions such that each has determinant one and $T=J_2 J_1$. This completes the proof. 
\end{proof} 
\begin{lemma}\label{lem3}
Every element in $\SU(n)$, can be written as a product of two reversible elements. 
\end{lemma}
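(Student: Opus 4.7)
The plan is to diagonalize $T$ in an orthonormal basis and then exhibit two explicit diagonal matrices $R_1, R_2 \in \SU(n)$ whose product is $T$ and whose eigenvalue multisets are each closed under $\lambda \mapsto \lambda^{-1}$. Since a semisimple element of $\SU(n)$ is reversible iff its spectrum (with multiplicity) is invariant under inversion, any such pair will do. After a unitary change of basis I may therefore assume $T = \text{diag}(\lambda_1, \ldots, \lambda_n)$ with $\prod_i \lambda_i = 1$.

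Put $k = \lfloor n/2 \rfloor$ and introduce the partial products $\alpha_0 := 1$ and $\alpha_i := \lambda_1 \lambda_2 \cdots \lambda_{2i}$ for $1 \le i \le k$, together with $\gamma_i := \alpha_{i-1}\lambda_{2i-1}$. The construction is to take
\[
R_1 := \text{diag}(\gamma_1, \gamma_1^{-1}, \gamma_2, \gamma_2^{-1}, \ldots, \gamma_k, \gamma_k^{-1})
\]
when $n = 2k$ is even, appending a trailing entry $1$ at position $n$ when $n = 2k+1$ is odd. By inspection $R_1 \in \SU(n)$ and its spectrum is a union of inverse pairs (plus possibly a single fixed $1$), so $R_1$ is reversible. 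The factor $R_2 := R_1^{-1} T$ is then diagonal, and the rigged identities $\gamma_i^{-1}\lambda_{2i-1} = \alpha_{i-1}^{-1}$ and $\gamma_i\lambda_{2i} = \alpha_i$ produce the telescope
\[
R_2 = \text{diag}(1, \alpha_1, \alpha_1^{-1}, \alpha_2, \alpha_2^{-1}, \ldots, \alpha_{k-1}, \alpha_{k-1}^{-1}, \alpha_k)
\]
when $n$ is even, with an extra trailing entry $\lambda_n$ appended when $n$ is odd.

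The remaining step is to check that the entries of $R_2$ themselves split into inverse pairs and self-inverse points, which reduces to the single spectral identity $\prod_i \lambda_i = 1$. In the even case $n = 2k$ this identity gives $\alpha_k = 1$, so $R_2$ starts and ends with $1$ and all interior entries are paired as $(\alpha_j, \alpha_j^{-1})$. In the odd case $n = 2k+1$ the same identity forces $\alpha_k = \lambda_n^{-1}$, so the tail $(\alpha_k, \lambda_n) = (\lambda_n^{-1}, \lambda_n)$ is itself an inverse pair. In both cases $R_2 \in \SU(n)$ is reversible, and $R_1 R_2 = T$ by construction. I do not expect a real obstacle: the parity of $n$ merely dictates how the telescope closes up, and the single hypothesis $\det T = 1$ simultaneously forces $\det R_1 = \det R_2 = 1$ and closes the inverse-pair structure of $R_2$, so the whole argument is organized bookkeeping around the telescope.
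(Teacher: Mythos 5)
Your construction is correct and is essentially the paper's own proof: after diagonalizing, both you and the paper factor $T$ into two diagonal matrices built from telescoping partial products $\lambda_1\cdots\lambda_j$, whose spectra consist of inverse pairs (closed under $\lambda\mapsto\lambda^{-1}$), hence are reversible, with $\det T=1$ closing the telescope. The only difference is inessential bookkeeping in the odd case (you place a trailing $1$ in $R_1$ and $\lambda_n$ in $R_2$, the paper absorbs $\lambda_1\cdots\lambda_n=1$ into $R_1$).
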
 
\begin{proof}
Suppose $A$ is an element of  $\SU(n)$. Let $\lambda_1, \ldots, \lambda_n$ be the eigenvalues of $A$. Note that $|\lambda_i|=1$ for all $i$.  Then $\C^n$ has an orthogonal decomposition into eigenspaces:
$$\C^n=\V_1 \oplus \cdots \oplus \V_n, $$
where each $\V_i$ has dimension $1$ and $T|_{\V_i}(v)=\lambda_i v$ for $v \in \V_i$. Choose an orthonormal basis $v_1, v_2, \ldots, v_n$ of $\C^n$, where $v_i \in \V_i$ for each $i$.  
Consider the unitary transformations $R_1: \V \to \V$ and $R_2: \V \to \V$ defined as follows: for each $k=0, 1, 2\ldots, $
\begin{equation} \label{r1} R_1(v_{2k})=\prod_{j=0}^{2(k-1)} {\overline \lambda}_{2k-j-1} ~v_{2k},~~~~~R_1(v_{2k+1})=\prod_{j=0}^{2k} \lambda_{2k-j+1} ~v_{2k+1}, \end{equation}
\begin{equation} \label{r2} R_2(v_{2k})= \prod_{j=0}^{2k-1} {\lambda}_{2k-j} ~v_{2k},~~~~R_2(v_{2k+1})=\prod_{j=0}^{2k-1} {\overline \lambda}_{2k-j} ~v_k,\end{equation}
with the convention $\lambda_{0}=1=\lambda_{-1}, ~v_0=0$.   Note that $k \leq [\frac{n}{2}]+1$ and $\max k=\frac{n}{2}$ or $\frac{n-1}{2}$ depending on $n$ is even or odd.  For each $i$, $R_1R_2(v_i)=\lambda_i v_i=T(v_i)$, and hence $T=R_1 R_2$. Note that both $R_1$ and $R_2$ has the property that if $\lambda$ is an eigenvalue, then so is 
$\bar \lambda=\lambda^{-1}$. This shows that $R_1$ and $R_2$ are reversible, cf. \cite{gp}. Further, if $T \in \SU(n)$, then $\lambda_1 \cdots \lambda_n=1$ and hence,  both $R_1$ and $R_2$ have determinants $1$. Hence the result follows.  
\end{proof}
In matrix form, up to conjugacy, if  $T=\hbox{diag}(\lambda_1, \lambda_2, \ldots, \lambda_n)$, then
\begin{equation}\label{r3} R_1=\hbox{diag}(\lambda_1, \overline \lambda_1, ~ \lambda_1\lambda_2 \lambda_3, \overline \lambda_1 \overline \lambda_2 \overline \lambda_3, ~\ldots, ~ \lambda_1 \lambda_2 \cdots \lambda_{2k+1}, \overline \lambda_1 \overline \lambda_2\cdots \overline \lambda_{2k+1}, \ldots) \end{equation}
\begin{equation} \label{r4} R_2=\hbox{diag}(1, ~\lambda_1 \lambda_2, \overline \lambda_1 \overline \lambda_2, ~\ldots, ~\lambda_1 \lambda_2 \cdots \lambda_{2k}, \overline\lambda_1 \overline \lambda_2 \cdots \overline \lambda_{2k}, \ldots).\end{equation}
Note that $R_2$ has always an eigenvalue $1$. Hence it can be written as a product of two involutions, see \cite[Proposition 3.3]{gp}. 
\begin{lemma}\label{lem4}
Let $n \equiv 2 \mod 4$, $n >2$. Let $T \in \SU(n)$ be a reversible element that can not be written as a product of two involutions in $\SU(n)$. Then $T$ can be written as a product of three involutions in $\SU(n)$. 
 \end{lemma}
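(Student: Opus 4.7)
Since $T$ cannot be written as a product of two involutions in $\SU(n)$, Lemma 3.2 forces $T$ to have no eigenvalue $\pm 1$; writing $n=4m+2$, we have a $T$-invariant orthogonal decomposition $\C^n = \W_1 \oplus \cdots \oplus \W_{2m+1}$ into $2$-dimensional subspaces on each of which $T|_{\W_i}$ has eigenvalues $\lambda_i, \lambda_i^{-1}$ with $\lambda_i \neq \pm 1$.

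The plan is to produce block-diagonal involutions $I_j = \bigoplus_{i=1}^{2m+1} I_{ji}$ for $j=1,2,3$, with $T = I_1 I_2 I_3$. On each block we require $I_{1i} I_{2i} I_{3i} = T|_{\W_i}$, where each $I_{ji}$ is an involution of $\W_i$. Because $T|_{\W_i}$ is not scalar, a product of three involutions on $\W_i$ all of determinant $+1$ (each being $\pm 1_{\W_i}$) cannot equal $T|_{\W_i}$, so exactly two of $\det I_{1i}, \det I_{2i}, \det I_{3i}$ must be $-1$. Simultaneously, $I_j \in \SU(n)$ requires $\prod_i \det I_{ji} = 1$, so the set $\{i : \det I_{ji} = -1\}$ must have even cardinality for each $j$.

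This reduces to a combinatorial problem: find a $3 \times (2m+1)$ matrix with entries in $\{\pm 1\}$ such that every column has exactly one $+1$ and every row has an even number of $-1$'s. Writing $a_j$ for the number of columns in which row $j$ carries the $+1$, this amounts to expressing $2m+1 = a_1+a_2+a_3$ with each $a_j$ a positive odd integer. For $m \geq 1$, equivalently $n > 2$, such a decomposition exists, e.g.\ $(a_1, a_2, a_3) = (1, 1, 2m-1)$. This is precisely where the hypothesis $n > 2$ enters, since for $n = 2$ no such partition is available and the approach would break down.

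Once the sign pattern is fixed, on each block $\W_i$ let $j_*(i) \in \{1,2,3\}$ denote the unique row carrying the $+1$ in column $i$, and set $I_{j_*(i), i} = 1_{\W_i}$. The relation $I_{1i} I_{2i} I_{3i} = T|_{\W_i}$ then collapses to a factorization of $T|_{\W_i}$ as a product of two determinant $-1$ involutions in $U(2)$, which exists by Lemma 3.2 applied to the $2$-dimensional block, since $T|_{\W_i}$ is a reversible element of $\SU(2)$ with no eigenvalue $\pm 1$. Assembling the block-level factorizations yields three involutions $I_1, I_2, I_3 \in \SU(n)$ with $T = I_1 I_2 I_3$, as required. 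The main obstacle is the simultaneous bookkeeping of the column conditions (coming from $\det T|_{\W_i} = 1$) and the row conditions (coming from $I_j \in \SU(n)$), which is resolved by the odd-partition argument above.
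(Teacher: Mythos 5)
Your proof is correct and takes essentially the same route as the paper: the paper's explicit involutions $I_1,I_2,I_3$, built from the block involutions $J_{i1},J_{i2}$ of the decomposition\eqnref{eq1}, realize exactly your sign pattern with $(a_1,a_2,a_3)=(1,1,2m-1)$, each $I_j$ being trivial on an odd number of blocks so that it has determinant $1$, which is precisely your parity bookkeeping. The only cosmetic point is that the lemma you cite for the no-eigenvalue-$\pm 1$ reduction and for the two-involution factorization of each $2$-dimensional block is \lemref{lem2} (the $n\equiv 2 \bmod 4$ statement), not \lemref{lem1}.
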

\begin{proof}
Let $n=4m+2$. We have the decomposition of $\C^n$ as in \eqnref{eq1}. Further we see that $T|_{\W_i}=J_{i1} J_{i2}$, where  $J_{i1}$ and $J_{i2}$ are involutions each with determinant $-1$. Now define involutions $I_1$, $I_2$, $I_3$ as follows.
$$I_1|_{\W_1}=J_{11}, \;\;I_1|_{\W_2}=1, \;\; I_1|_{\W_i}=J_{i1}, \;i=3, \ldots, 2m+1.$$
$$I_2|_{\W_1}=1, \;\;\;\;I_2|_{\W_2}=J_{21}, ~~~ I_2|_{\W_i}=J_{i2}, ~ i=3, \ldots, 2m+1.$$
$$I_3|_{\W_1}=J_{12}, \;\; I_3|_{\W_2}=J_{22}, ~~ ~I_3|_{\W_i}=1, ~ i=3, \ldots, 2m+1.$$
Then each $I_1$, $I_2$, $I_3$ has determinant $1$ and they are involutions, and $T=I_1 I_2 I_3$. 
\end{proof} 
\subsubsection*{Proof of \thmref{th1}}Combining the above lemmas we have \thmref{th1}. 

\section{Decomposition of complex hyperbolic isometries}\label{chi}
In this section, we prove the following theorem. 
\begin{theorem} \label{th2} Let $T$ be a holomorphic isometry of $\h^n$, that is, $T \in \PU(n,1)$. Then $T$ is a product of at most four involutions and a complex $k$-reflection, where $k \leq 2$; $k=0$ if $T$ is elliptic; $k=1$ if $T$ is ellipto-translation or hyperbolic; $k=2$ if $T$ is ellipto-parabolic and $n>2$. 
\end{theorem}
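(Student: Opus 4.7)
The plan is to use the conjugacy classification of $T \in \U(n,1)$ to put $T$ in canonical block form, strip off a suitably chosen complex $k$-reflection, and then factor the remainder as a product of two reversible elements in $\U(n,1)$, each of which is in turn a product of two involutions.

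In the elliptic case ($k=0$), up to conjugacy $T$ acts as multiplication by $\lambda \in \s^1$ on its one-dimensional time-like eigenspace and as $M \in \U(n)$ on the space-like orthogonal complement. Working modulo the center of $\U(n,1)$, I rescale by a scalar $\nu \in \s^1$ with $\nu^n \det M = 1$ so that $\nu M \in \SU(n)$, and take the complex $0$-reflection $R$ with negative-type eigenvalue $\nu\lambda$ aligned with $T$'s time-like line; then $R^{-1}(\nu T)$ acts as identity on the time-like line and as $\nu M$ on the complement. By \lemref{lem3}, write $\nu M = M_1 M_2$ with each $M_i$ reversible. Since a reversible element of $\U(n)$ has determinant $\pm 1$ and factors as two involutions in $\U(n)$ (possibly of determinant $-1$), one writes $\nu M$ as a product of four involutions $J_{ij}$ in $\U(n)$. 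Each $J_{ij}$ lifts to an involution $\epsilon_{ij} \oplus J_{ij}$ in $\U(n,1)$, and the signs $\epsilon_{ij} \in \{\pm 1\}$ are chosen so that $\prod \epsilon_{ij} = 1$. This exhibits $R^{-1}(\nu T)$ as a product of four involutions, giving $T = R \cdot J_1 J_2 J_3 J_4$ in $\PU(n,1)$.

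For the hyperbolic and parabolic cases the same strategy applies with a different choice of $R$. If $T$ is hyperbolic, its 2-dimensional invariant indefinite block $\u$ satisfies $T|_{\u} = \hbox{diag}(re^{i\theta}, r^{-1}e^{i\theta})$ in null coordinates, and the complex $1$-reflection with eigenvalue $e^{i\theta}$ on $\u$ reduces this block to the real hyperbolic $\hbox{diag}(r, r^{-1})$, which is strongly reversible in $\U(n,1)$. If $T$ is non-unipotent parabolic, the invariant 2- or 3-dimensional indefinite block has minimal polynomial $(x-\lambda)^j$ with $j = 2$ (ellipto-translation, $k = 1$) or $j = 3$ (ellipto-parabolic, $k = 2$); the complex $k$-reflection with eigenvalue $\lambda$ on this block converts it into a unipotent translation (vertical or non-vertical respectively), which is strongly reversible. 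In each case, after suitable scalar normalization, the two-involution factorization of the simplified indefinite block combines with an adapted \lemref{lem3}-style pairing of the eigenvalues of $T|_{\W}$ on the space-like complement to factor $R^{-1}T$ into two reversible elements of $\U(n,1)$, each a product of two involutions.

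The main obstacle is the congruence $n \equiv 2 \mod 4$, where \thmref{th1} alone gives five involutions for elements of $\SU(n)$, one too many. The resolution is that we work in $\U(n,1)$ rather than $\SU(n)$: a $\U(n)$-involution of determinant $-1$ lifts to a genuine involution $\epsilon \oplus J$ in $\U(n,1)$, so each reversible $M_i$ in the construction contributes exactly two involutions regardless of the parity of $n$, giving the uniform bound of four. A secondary technicality, particularly in the hyperbolic and parabolic cases, is verifying that the reversible factors produced by the adapted \lemref{lem3} construction are strongly reversible in $\U(n,1)$; this uses the Gongopadhyay--Parker classification of strongly reversible isometries of complex hyperbolic space.
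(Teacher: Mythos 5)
Your elliptic, hyperbolic and ellipto-parabolic cases follow essentially the paper's own route (peel off a complex $k$-reflection, use \lemref{lem2}, \lemref{lem3} and \thmref{th1} on the space-like part, and conclude strong reversibility of the two reversible factors via \thmref{rch}), and the device of lifting determinant $-1$ involutions of $\U(n)$ to involutions $\epsilon\oplus J$ of $\U(n,1)$ to kill the $n\equiv 2 \bmod 4$ defect is exactly what the paper does in \lemref{e}. The problem is the ellipto-translation case (and the untreated case where $T$ itself is a unipotent translation).

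There you assert that removing the $1$-reflection leaves a vertical translation on the two-dimensional indefinite block, ``which is strongly reversible.'' This is false: a vertical translation is not even reversible in $\U(n,1)$. Writing it as $N=I+it\,u u^{*}$ with $u$ null, $t\neq 0$, where $u^{*}(x)=\langle x,u\rangle$, its only fixed point on $\overline{\h^n}$ is $[u]$, so any $g\in\U(n,1)$ with $gNg^{-1}=N^{-1}$ must satisfy $gu=\mu u$, and then $gNg^{-1}=I+i|\mu|^{2}t\,uu^{*}$, which can never equal $N^{-1}=I-it\,uu^{*}$ since $|\mu|^{2}>0$. As a product of two involutions is always reversible, neither the $(x-\lambda)^2$ block nor any factor of $R^{-1}T$ that carries this size-two null Jordan block can be a product of two involutions, so your plan of factoring $R^{-1}T$ into two reversible (hence, by \thmref{rch}, strongly reversible) elements collapses precisely for ellipto-translations and for vertical translations. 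The paper avoids this by treating that block with the theorem of Djokovi\'c and Malzan, which gives \emph{four} involutions for the vertical-translation block, and then pairing them with four involutions of the space-like complement coming from \lemref{lem2} and \lemref{lem3} (see \lemref{vt} and \lemref{et}); only the $(x-\lambda)^3$ (non-vertical) block is genuinely strongly reversible, which is why your ellipto-parabolic argument, unlike the ellipto-translation one, is sound. To repair your proof you would need to import that Djokovi\'c--Malzan input (or some substitute) for the vertical block, and also add the missing case of purely unipotent $T$.
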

Note that ellipto-parabolics do not exists when $n=2$. An element $f$ in $\U(2,1)$ that has minimal polynomial of the form $(x-\lambda)^3$,  is an element of the form $\Lambda P$, where $\Lambda=\lambda 1_{3}$ is a central element and $P$ is a non-vertical translation. Hence $f$  acts as a non-vertical translation. 

Since an isometry that is a product of two involutions is also reversible, we have the following.
\begin{cor}
Let $T$ be a holomorphic isometry of $\h^n$, that is, $T \in \PU(n,1)$. Then $T$ is a product of at most two reversible elements and a complex $k$-reflection, where $k \leq 2$; $k=0$ if $T$ is elliptic; $k=1$ if $T$ is ellipto-translation or hyperbolic; $k=2$ if $T$ is ellipto-parabolic and $n>2$.
\end{cor}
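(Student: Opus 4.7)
The plan is to treat $T$ case by case according to its conjugacy type, using the $T$-invariant orthogonal decomposition $\C^{n,1}=\u\oplus\W$ from Section~2, where $\u$ is the indefinite block containing the negative direction and $\W$ is its positive-definite complement. In each case I pick the $k$-reflection $R$ so that $R|_{\u}=\lambda I$ and $R|_{\W}=I$, choosing $\lambda\in\s^{1}$ to ``absorb'' the appropriate eigenvalue of $T|_{\u}$; it then suffices to write $TR^{-1}$ as a product of four involutions in $\U(n,1)$, which I assemble as direct sums $I_{j}=I_{j}^{\u}\oplus I_{j}^{\W}$ constructed separately on the two blocks.

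For $T$ elliptic, $\u=\V_{-}$ is one-dimensional and $\W=\V_{+}\cong\C^{n}$, with $T=\mu\oplus U_{0}$ for some $\mu\in\s^{1}$ and $U_{0}\in\U(n)$. Choosing $R$ with $R|_{\V_{-}}=\mu$ makes $TR^{-1}=1\oplus U_{0}$, reducing the task to writing $U_{0}\in\U(n)$ as four involutions in $\U(n)$ (and extending each by identity on $\V_{-}$). This follows by applying the proof of \lemref{lem3} to $\U(n)$ (the construction uses no $\SU$-hypothesis) to obtain $U_{0}=S_{1}S_{2}$ with $S_{i}$ reversible in $\U(n)$, and then writing each reversible element of $\U(n)$ as two involutions via the construction in the proof of \lemref{lem2}; allowing determinant-$(-1)$ involutions is harmless in $\U(n,1)$, so the $n\equiv 2\pmod 4$ obstruction from \thmref{th1} disappears.

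For $T$ hyperbolic, $\u$ is two-dimensional indefinite with $T|_{\u}$ having eigenvalues $re^{i\theta},r^{-1}e^{i\theta}$, and $T|_{\W}\in\U(n-1)$. Taking the $1$-reflection $R|_{\u}=e^{i\theta}I$ gives $TR^{-1}|_{\u}$ eigenvalues $r,r^{-1}$, so it is a ``real'' hyperbolic element of $\U(1,1)$ and hence a product of two involutions (every element of $\U(1,1)$ is strongly reversible, cf.\ \cite{gp}). The factor $T|_{\W}\in\U(n-1)$ is at most four involutions by the elliptic argument. Assembling direct sums—two nontrivial $\u$-involutions in slots $1,2$ and identity in slots $3,4$, while the four $\W$-involutions occupy all four slots—yields $TR^{-1}$ as four involutions in $\U(n,1)$. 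The ellipto-translation case is handled identically, with $R|_{\u}=\lambda I$: then $TR^{-1}|_{\u}$ is unipotent in $\U(1,1)$, which is again a product of two involutions.

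For $T$ ellipto-parabolic with $n>2$, $\u$ is three-dimensional of signature $(2,1)$ with $T|_{\u}$ of minimal polynomial $(x-\lambda)^{3}$, and $\dim\W=n-2\ge 1$. Take the $2$-reflection $R|_{\u}=\lambda I$, $R|_{\W}=I$; then $TR^{-1}|_{\u}$ is a unipotent element of $\U(2,1)$ with minimal polynomial $(x-1)^{3}$, i.e.\ a non-vertical Heisenberg translation. The principal obstacle, and the technical heart of the theorem, is to show that this element is a product of at most four involutions in $\U(2,1)$: two will not suffice because the element is not strongly reversible (\cite{gp}), so a direct construction is needed. I would prove the four-involution bound by exhibiting such involutions explicitly in a horospherical null basis of $\C^{2,1}$. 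After multiplying $T$ by a central scalar $c\in\s^{1}$ chosen so that $\det(T|_{\W})=\pm 1$ (possible since $c\mapsto c^{\dim\W}$ is surjective onto $\s^{1}$), the factor $T|_{\W}\in\U(n-2)$ is four involutions in $\U(n-2)$ by the elliptic argument, and the direct-sum assembly on $\u\oplus\W$ completes the proof.
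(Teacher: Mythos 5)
Your proposal misses the actual content of this corollary: it is an immediate consequence of \thmref{th2}, which is proved just above it. Given $T=RJ_1J_2J_3J_4$ with $R$ a $k$-reflection and $J_i$ involutions, one simply groups $J_1J_2$ and $J_3J_4$: a product of two involutions is strongly reversible, hence reversible, so $T$ is a $k$-reflection times two reversible elements. Instead you set out to re-prove the four-involution theorem itself case by case; that would be acceptable if correct, but your re-derivation has genuine gaps.

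First, a determinant obstruction: in your elliptic case you reduce to writing an arbitrary $U_0\in\U(n)$ (and in the hyperbolic case $T|_{\W}\in\U(n-1)$) as a product of four involutions. This is impossible whenever $\det U_0\neq\pm1$, since every involution in $\U(n)$ has determinant $\pm1$; concretely, the construction of \lemref{lem3} applied outside $\SU(n)$ leaves one of the two factors with the unpaired eigenvalue $\det U_0$, so it is not reversible and \lemref{lem2} cannot be applied to it. The paper avoids this by using the freedom in the lift of $T\in\PU(n,1)$ to arrange $T|_{\W}\in\SU(n-k)$ and by absorbing the remaining negative-type eigenvalue into the $k$-reflection (\lemref{e}, \corref{ce}, \corref{ch}); you perform this normalization only in your last case. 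Second, your treatment of the parabolic blocks reverses the facts. The unipotent block with minimal polynomial $(x-1)^2$ in $\U(1,1)$ is \emph{not} a product of two involutions: it is not even reversible in $\U(1,1)$, since its image in $\PU(1,1)\cong{\rm PSL}(2,\R)$ is a parabolic that is not conjugate to its inverse; this is precisely why \lemref{et} invokes Djokovi\'c--Malzan \cite{dm} to write that block as \emph{four} involutions, so your ellipto-translation case fails as stated. Conversely, the $(x-1)^3$ unipotent in $\U(2,1)$, which you declare not strongly reversible and propose to handle by an unspecified ``explicit horospherical'' construction, is in fact a product of two involutions by \cite[Theorem 4.1]{gp} together with \thmref{rch} (this is \lemref{vt} and the key step of \lemref{ep}); as written, your ellipto-parabolic case is both based on a false premise and left unproved.
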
 

The theorem will follow from several lemmas that we prove below. We also note down the following theorem from \cite{gp} that will be used in the proof.

\begin{theorem} \label{rch}\cite[Theorem 4.2]{gp}
\begin{itemize}
\item[(i)] Let $T$ be an element of ${\rm U}(n,1)$. Then $T$ is strongly
reversible if and only if it is reversible.
\item[(ii)] Let $T$ be an element of ${\rm SU}(n,1)$ whose characteristic
polynomial is self-dual. Then the following conditions are equivalent
\begin{enumerate}
\item[(a)] $T$ is reversible but not strongly reversible.
\item[(b)] $T$ is hyperbolic, $n\equiv 1 \mod 4$, and $\pm1$
is not an eigenvalue of $T$.
\end{enumerate}
\end{itemize}
\end{theorem}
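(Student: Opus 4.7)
My plan for Theorem~\ref{rch} is to derive both parts from a $T$-invariant orthogonal decomposition of $\mathbb{C}^{n,1}$. The easy direction of (i) is immediate: if $T = J_1 J_2$ with $J_1^2 = J_2^2 = 1$, then $J_1 T J_1 = J_2 J_1 = T^{-1}$. For the converse I use the conjugacy classification of Section~2. Reversibility of $T \in \mathrm{U}(n,1)$ means the full eigenvalue data (eigenvalues together with Jordan type and signature labels) is invariant under $\lambda \mapsto \bar{\lambda}$. Since $\mathrm{U}(n,1)$ has only one indefinite direction, the unique indefinite/negative-type or null eigenvalue must equal its conjugate: this forces the elliptic and parabolic cases to have $\pm 1$ as the distinguished eigenvalue, and the hyperbolic case to have the null pair $\{re^{i\theta}, r^{-1}e^{i\theta}\}$ with $\theta \in \{0, \pi\}$. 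The remaining positive-type eigenvalues $\neq \pm 1$ come in conjugate pairs $\{\mu, \bar{\mu}\}$ spanning $2d_\mu$-dimensional positive-definite $T$-invariant subspaces.

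Mimicking the recipe of \lemref{lem2}, on each 2-dimensional piece with $T f_1 = \mu f_1$, $T f_2 = \bar{\mu} f_2$, the swap $J_1 : f_1 \leftrightarrow f_2$ together with $J_2 := J_1 T$ gives two unitary involutions whose product is $T$ on that piece. The same construction, using dual null vectors, handles the indefinite 2-dimensional null block in the hyperbolic case; higher-multiplicity blocks decompose into 2-dimensional pieces. On an eigenspace with eigenvalue $\pm 1$, or on the 2- or 3-dimensional Jordan block $\mathbb{U}$ of~(\ref{pe}) in the parabolic case, $T$ has a simple enough structure that two involutions can be produced directly. Orthogonal assembly yields $T = J_1 J_2$, proving (i).

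For (ii) I compute determinants. If $T \in \mathrm{SU}(n,1)$ is hyperbolic, self-dual and has no $\pm 1$ eigenvalue, then any reversing involution $I_1$ must interchange $\mathbb{V}_\lambda \leftrightarrow \mathbb{V}_{\lambda^{-1}}$ for every eigenvalue (since $\lambda \neq \lambda^{-1}$). On a pair of eigenspaces of common multiplicity $d$ the block of $I_1$ has the form $\left(\begin{smallmatrix} 0 & A^{-1} \\ A & 0 \end{smallmatrix}\right)$ of determinant $(-1)^d$. Summing exponents across all positive pairs (total dimension $n-1$) and the null pair gives $\det I_1 = (-1)^{(n-1)/2}\cdot(-1) = (-1)^{(n+1)/2}$. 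When $n \equiv 1 \pmod 4$ this equals $-1$, so no reversing involution lies in $\mathrm{SU}(n,1)$; this is (b)$\Rightarrow$(a). For the converse, if any condition in (b) fails, then either (I) $T$ has a $\pm 1$ eigenvalue, whose $1$-dimensional piece carries a free $\pm 1$ factor that can correct any sign; (II) $T$ is elliptic or parabolic, in which case self-duality together with reversibility already forces a $\pm 1$ eigenvalue, reducing to (I); or (III) $T$ is hyperbolic with $n \equiv 3 \pmod 4$, where the forced determinant is already $+1$. In each sub-case the construction from part (i) can be adjusted to lie in $\mathrm{SU}(n,1)$.

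The main obstacle will be the parabolic case: writing an explicit reversing involution on the unipotent Jordan block $\mathbb{U}$ of dimension $2$ or $3$ with the prescribed determinant, using a null flag compatible with the signature, and then checking unitarity. A secondary subtlety is justifying that on higher-multiplicity positive pairs any reversing involution swaps $\mathbb{V}_\mu$ with $\mathbb{V}_{\bar{\mu}}$ as a whole rather than mixing them internally; this uses $\mu \neq \bar{\mu}$ together with the intertwining relation $I_1 T = T^{-1} I_1$.
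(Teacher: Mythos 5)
Note first that the paper does not prove Theorem~\ref{rch} at all: it is imported verbatim from \cite[Theorem 4.2]{gp}, so there is no internal proof to measure your plan against. Judged on its own, your outline reproduces the broad strategy one would expect (and that \cite{gp} follows): decompose $\C^{n,1}$ into $T$-invariant pieces, build a reversing involution $J_1$ blockwise by swapping $\V_\mu$ with $\V_{\bar\mu}$, set $J_2=J_1T$, and in the $\SU(n,1)$ case track the forced sign $(-1)^{(n+1)/2}$ of $\det J_1$ when no eigenvalue is $\pm1$. The semisimple cases and the determinant count in (ii) are essentially right (modulo spelling out why $T$ is still \emph{reversible} in $\SU(n,1)$, which follows by rescaling a reversing element of $\U(n,1)$ by an $(n+1)$-th root of unity).

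The genuine gap is in part (i), exactly at the step you defer. Your working criterion --- that reversibility of $T\in\U(n,1)$ is \emph{equivalent} to invariance of the eigenvalue/Jordan/signature data under $\lambda\mapsto\bar\lambda$ --- is only a necessary condition, and it fails for parabolics. A unipotent with minimal polynomial $(x-1)^2$ has the form $x\mapsto x+it\langle x,u\rangle u$ with $u$ null and $0\neq t\in\R$; conjugating by $g\in\U(n,1)$ replaces $u$ by $gu$, and rescaling $u$ changes $t$ only by a positive factor, so the sign of $t$ is a conjugacy invariant, whereas inversion flips it. Hence a vertical translation has self-conjugate eigenvalue data but is \emph{not} reversible, and no reversing involution exists on the $2$-dimensional block $\u$ of~\eqnref{pe} --- which is precisely why \lemref{vt} and \lemref{et} of this paper resort to four involutions (via Djokovi\'c--Malzan) for vertical translations and ellipto-translations rather than two. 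So you cannot ``produce two involutions directly'' on the $2$- or $3$-dimensional Jordan block; the proof of (i) must first determine which parabolics are reversible at all (only those whose non-semisimple part is the $3$-dimensional block, cf.\ \cite[Theorem 4.1]{gp}) and then run the construction only there. Two smaller points: in (ii), when $n$ is even a self-dual hyperbolic element automatically has a $\pm1$ eigenvalue (the $n-1$ positive-type eigenvalues cannot all pair off), so your trichotomy is exhaustive, but this should be stated; and in case (I) you should check that the sign correction on a $\pm1$ eigenspace can be made compatibly with \emph{both} $J_1$ and $J_2=J_1T$ lying in $\SU(n,1)$ (this is automatic since $\det J_2=\det J_1\det T=\det J_1$, but it deserves a line).
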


\medskip Suppose that $T$ is in ${\rm PU}(n,1)$. Let $\widetilde T$ be a lift of $T$ to 
${\rm U}(n,1)$ and note that $e^{i\theta}\widetilde T$ corresponds to the same 
element of ${\rm PU}(n,1)$ for all $\theta\in[0,2\pi)$. For simplicity, we will not use the `tilde'  anymore to denote the lift and will use the same symbol for an element in $\PU(n,1)$ and its preferred choice of lift. 

\begin{lemma}\label{e}
Let $T$ be an elliptic element of $\SU(n,1)$ with negative type eigenvalue $1$. Then $T$ is a product of at most four involutions. 
\end{lemma}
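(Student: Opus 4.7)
The strategy is to reduce the problem to $\SU(n)$ via the standard block decomposition, and then exploit the fact that reversibility upgrades to strong reversibility in $\U(n,1)$ (Theorem~\ref{rch}(i)), thereby beating the naive bound that would be obtained by simply quoting \thmref{th1} (which would give $5$ involutions in the $n\equiv 2\mod 4$ case).

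First I would set up the decomposition. Since $T$ is elliptic and semisimple, and its negative type eigenvalue is $1$, there is a time-like eigenvector $v_0$ with $Tv_0=v_0$. Let $\W=\langle v_0\rangle^{\perp}$; this is an $n$-dimensional space-like $T$-invariant subspace, so that $\C^{n,1}=\langle v_0\rangle\oplus\W$ orthogonally. Identifying $\U(\langle,\rangle|_{\W})$ with $\U(n)$, the restriction $T':=T|_{\W}$ lies in $\U(n)$; since $\det T=1$ and the eigenvalue on $\langle v_0\rangle$ is $1$, we have $T'\in\SU(n)$.

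Next I would apply \lemref{lem3}, not \thmref{th1}, to write $T'=R_1R_2$ as a product of two reversible elements in $\SU(n)$. Lifting trivially, define $\widetilde{R}_i=1\oplus R_i\in\SU(n,1)$, so that $T=\widetilde{R}_1\widetilde{R}_2$. Reversibility passes to the lifts: if $g_i\in\U(n)$ conjugates $R_i$ to $R_i^{-1}$, then $\widetilde{g}_i=1\oplus g_i\in\U(n,1)$ conjugates $\widetilde{R}_i$ to $\widetilde{R}_i^{-1}$, so each $\widetilde{R}_i$ is reversible in $\U(n,1)$. By \thmref{rch}(i), each $\widetilde{R}_i$ is therefore strongly reversible in $\U(n,1)$, i.e.\ a product of two involutions. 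Multiplying out, $T$ is a product of at most four involutions.

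The only subtle point, and the place where one might initially worry, is the $n\equiv 2\mod 4$ case: there \thmref{th1} only gives a decomposition of $T'\in\SU(n)$ into five involutions, which would yield five involutions for $T$ and not match the claimed bound. The resolution is the observation above that I should not decompose $T'$ fully inside $\SU(n)$; instead I use only the \emph{two-reversible} decomposition of \lemref{lem3}, which holds uniformly in $n$, and then absorb the extra flexibility into the step from reversibility to strong reversibility that is available in $\U(n,1)$ but not in $\SU(n)$ when $n\equiv 2\mod 4$. Beyond this conceptual point, the argument is essentially bookkeeping on the orthogonal decomposition $\C^{n,1}=\langle v_0\rangle\oplus\W$.
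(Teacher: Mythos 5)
Your argument is correct, but it is not the route the paper takes. The paper keeps everything inside the compact factor: for $n\neq 2 \mod 4$ it simply quotes \thmref{th1} to write $T|_{\W}$ as a product of four involutions in $\SU(n)$ and extends by $1$ on the time-like line, while for $n=2 \mod 4$ it uses \lemref{lem2} and \lemref{lem3} to write $T|_{\W}=j_1j_2j_3j_4$ with each $j_i$ an involution in $\U(n)$ of determinant $-1$, and then twists, setting $J_i=-1\oplus j_i$, so that the four involutions land in $\SU(n,1)$ and their product is still $T$ because $(-1)^4=1$ on the time-like line. You instead stop at the two-reversible decomposition of \lemref{lem3}, lift each factor by $1$ on the time-like line, observe the lifts are reversible in $\U(n,1)$ (which does require, and you correctly note, that the reverser of $R_i$ can be chosen unitary --- true since $R_i$ and $R_i^{-1}$ are unitarily diagonalizable with the same spectrum), and invoke \thmref{rch}(i). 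This buys uniformity: no case split on $n \bmod 4$ and no determinant bookkeeping. What it costs is control over where the involutions live: \thmref{rch}(i) only produces involutions in $\U(n,1)$, whereas the paper's determinant-twisting construction produces them in $\SU(n,1)$. For the way \lemref{e} is actually used downstream (in \lemref{cel} and \thmref{th2}, which are statements about $\PU(n,1)$, where $\U(n,1)$-involutions descend to involutions), your weaker conclusion suffices, but you should state explicitly that your four involutions are only claimed to lie in $\U(n,1)$.
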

\begin{proof}
Since $T$ has negative type eigenvalue $1$,  $\C^{n,1}$ has a $T$-invariant decomposition $\C^{n,1}=L \oplus \W$, where $T|_L=1$, $\dim L=1$ and $\dim \W=n$, $T|_{\W} \in \SU(n)$. By \thmref{th1}, if $n \not \equiv 2 \mod 4$, then $T|_{\W}$ can be written as a product of four involutions. Assume $T|_{\W}$ has no eigenvalue $\pm 1$. If $n\equiv 2 \mod 4$, it follows from \lemref{lem2} and \lemref{lem3} that $T|_{\W}=j_1 j_2 j_3 j_4$, where $j_i$ are involutions in $\U(n)$ each of determinant $-1$. For each $i=1,2,3,4$, define $J_i=-1 \oplus j_i$. Then $J_i$ is an element of $\SU(n,1)$ and $T=J_1 J_2 J_3 J_4$.  When $T|_{\W}$ has an eigenvalue $\pm 1$, then it can be seen using \lemref{lem2} that it is a product of four involutions.  This proves the lemma. 
\end{proof}

\begin{lemma}\label{cel}
Let $T$ be an elliptic element in $\PU(n,1)$. Then $T$ is a product of a $k$-reflection, $k \geq 0$, and four involutions. 
\end{lemma}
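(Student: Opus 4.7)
The plan is to strip off a complex $0$-reflection so that what remains fits the hypothesis of \lemref{e}.

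Pick any lift $\tilde T \in \U(n,1)$ of $T$. Since $T$ is elliptic, $\tilde T$ has a negative-type (or indefinite-type) eigenvalue $\lambda$, $|\lambda|=1$, and inside its eigenspace one can choose a $1$-dimensional negative line $L$ on which $\tilde T$ acts by $\lambda$. Then $\W := L^\perp$ is positive definite of dimension $n$, is $\tilde T$-invariant, and $A := \tilde T|_\W \in \U(n)$.

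The key step is to rescale the lift so that $A$ lands in $\SU(n)$: writing $\mu = \det A$, choose $\eta \in \s^1$ with $\eta^n = \mu^{-1}$ and replace $\tilde T$ by $\eta\tilde T$. This does not change the corresponding element of $\PU(n,1)$, but the new restriction to $\W$ has determinant one. Relabel $\lambda$ and $A$ for this new lift. Let $R \in \U(n,1)$ be the complex $0$-reflection acting as multiplication by $\lambda$ on $L$ and as the identity on $\W$; then $S := R^{-1}\tilde T$ is the identity on $L$ and equals $A \in \SU(n)$ on $\W$, so $S \in \SU(n,1)$ is elliptic with $1$ as a negative-type eigenvalue (if it happens to be of indefinite type the proof of \lemref{e} still goes through unchanged, since it only needs a $1$-dimensional negative eigenline for the eigenvalue $1$).

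\lemref{e} then writes $S$ as a product of at most four involutions, so $\tilde T = R\, J_1 J_2 J_3 J_4$. The only delicate part of the plan is the rescaling in the middle step: without it, the quotient $R^{-1}\tilde T$ need not sit in $\SU(n,1)$, which is the ambient group for \lemref{e} (and for the underlying \thmref{th1}). All the $\SU(n)$-level subtleties (the $n \equiv 2 \pmod 4$ case and the presence of eigenvalues $\pm 1$ on $\W$) are already absorbed by \lemref{e}, so no further casework is required here.
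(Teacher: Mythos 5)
Your proof is correct and is essentially the paper's own approach: split off a reflection supported in the distinguished eigenvalue's eigenspace (after rescaling the lift so the restriction to the orthogonal complement lies in $\SU$) and apply \lemref{e} to the remaining element of $\SU(n,1)$. The only difference is that by peeling off just a negative eigenline you get a complex $0$-reflection rather than the $k$-reflection of the paper's proof, which means you have in fact directly proved the sharper \corref{ce} (which the paper establishes immediately afterwards by exactly your argument), and your explicit rescaling step and the remark that \lemref{e} only needs a negative eigenline for the eigenvalue $1$ (not literal negative type) make precise two details the paper leaves implicit.
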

\begin{proof}
Choose a lift of $T$ in $\U(n,1)$ such that $\C^{n,1}$ has a $T$-invariant orthogonal decomposition
$\C^{n,1}=\u \oplus \W$, where $\dim \u=k+1 \geq 1$, $T|_{\W} \in \SU(n-k)$ and $T|_{\u}(v)=\lambda v$. Thus we have $T=RK$, where $R$ is a $k$-reflection and $K \in \SU(n,1)$ with negative type eigenvalue $1$ of multiplicity $k+1$. By the above lemma it follows that $T=R J_1 J_2 J_3 J_4$. This completes the proof. 
\end{proof}
\begin{cor}  \label{ce}
Let $T$ be an elliptic element in $\PU(n,1)$. Then $T$ is a product of a complex rotation and four involutions. 
\end{cor}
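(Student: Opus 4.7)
The plan is to refine the decomposition used in \lemref{cel}. There, the proof peels off a $k$-reflection $R$ corresponding to the entire indefinite $\lambda$-eigenspace $\u$ of $T$; instead, I will peel off only a complex rotation supported on a single negative-type line inside $\u$, and absorb the remaining space-like part of $\u$ into the other factor. The remaining factor will be elliptic in $\SU(n,1)$ with negative-type eigenvalue $1$, so \lemref{e} finishes the job.

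Concretely, starting from the lift and decomposition $\C^{n,1}=\u\oplus \W$ given in \lemref{cel}, with $T|_{\u}=\lambda\,\I$, $\u$ indefinite of signature $(k,1)$, and $T|_{\W}\in\SU(n-k)$, I pick any negative-type line $L\subset \u$ and set $\u'=L^{\perp}\cap \u$, so that $\u=L\oplus \u'$ and $L^{\perp}=\u'\oplus \W$ is space-like of dimension $n$. The restriction $T|_{L^{\perp}}$ acts as $\lambda\,\I$ on $\u'$ and as $T|_{\W}\in\SU(n-k)$ on $\W$, so $\det T|_{L^{\perp}}=\lambda^{k}$. Because we work in $\PU(n,1)$, the lift to $\U(n,1)$ is only determined up to an $\s^{1}$-scalar, so I choose $\mu\in\s^{1}$ with $\mu^{n}=\lambda^{-k}$ and replace the lift by $\mu T$. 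After this renormalization, $T|_{L^{\perp}}$ lies in $\SU(n)$.

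Next I let $R_{0}\in\U(n,1)$ act by the scalar $T|_{L}$ on $L$ and by the identity on $L^{\perp}$; this $R_{0}$ is a $0$-reflection, hence a complex rotation in the sense of the paper. Setting $N=R_{0}^{-1}T$, the element $N$ fixes $L$ pointwise and agrees with $T$ on $L^{\perp}$, so $N\in\SU(n,1)$ is elliptic with negative-type eigenvalue $1$. Then \lemref{e} expresses $N$ as a product of four involutions $J_{1},J_{2},J_{3},J_{4}$ in $\U(n,1)$, giving $T=R_{0}J_{1}J_{2}J_{3}J_{4}$, which descends to the claimed factorization in $\PU(n,1)$.

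The only step that really needs care is the scalar renormalization that forces $T|_{L^{\perp}}$ into $\SU(n)$; this is precisely the extra flexibility gained by passing from $\U(n,1)$ to $\PU(n,1)$, and it is what lets us upgrade the $k$-reflection of \lemref{cel} to a $0$-reflection here. The case $k=0$ is automatic, since then $\u'=\{0\}$, $L=\u$, and no refinement of \lemref{cel} is needed.
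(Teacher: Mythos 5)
Your proposal is correct and follows essentially the same route as the paper: both factor a suitably renormalized lift as a complex rotation $R_0$ supported on a negative eigenline times an element of $\SU(n,1)$ fixing that line pointwise, and then invoke \lemref{e}. The only difference is that you spell out explicitly the scalar adjustment $\mu^{n}=\lambda^{-k}$ that makes the second factor land in $\SU(n,1)$, which the paper leaves implicit in its choice of lift.
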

\begin{proof}
Since $T$ is semisimple, we can choose a lift $T$ such that $\C^{n,1}$ has the decomposition $T=RK$, where $K \in \SU(n,1)$ be an elliptic with negative type eigenvalue $1$ and $R$ is an elliptic with one negative type eigenvalue $\lambda$,  $|\lambda|=1$, and one positive type eigenvalue $1$ of multiplicity $n$. Note that $R$ represents a complex rotation. The proof now follows as above. 
\end{proof}
\begin{lemma}
Let $T$ be a hyperbolic element in $\SU(n,1)$, $n>2$,  with real null eigenvalues. Then $T$ can be written as a product of four involutions. 
\end{lemma}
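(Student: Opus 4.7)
The plan is to write $T$ as a product of two reversible elements in $\U(n,1)$ and then invoke \thmref{rch}(i), which upgrades each reversible factor to a product of two involutions, giving four involutions in all.

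First I would use the conjugacy classification to decompose $\C^{n,1}=\u\oplus\W$ as a $T$-invariant orthogonal direct sum, where $\u$ is the two-dimensional indefinite subspace spanned by the null eigenvectors $v_{\pm}$ corresponding to the real reciprocal null eigenvalues $\mu_{\pm}$, and $\W$ is its space-like orthogonal complement of dimension $n-1$. Since $\det T=1$ and $\det(T|_{\u})=\mu_{+}\mu_{-}=1$, the restriction $T|_{\W}$ lies in $\SU(n-1)$.

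Next I would factor each block separately into two reversibles. On $\u$, the characteristic polynomial of $T|_{\u}$ is $(x-\mu_{+})(x-\mu_{-})$, which is self-dual, so $T|_{\u}$ is reversible in $\U(\u)\cong \U(1,1)$; by \thmref{rch}(i) we may write $T|_{\u}=J_1'J_2'$ with each $J_i'$ an involution in $\U(\u)$. (Concretely, after rescaling $v_{\pm}$ so that $\langle v_+,v_-\rangle$ is real, the swap $v_+\leftrightarrow v_-$ is such an involution, and the second factor is determined.) On $\W$, \lemref{lem3} gives $T|_{\W}=R_1R_2$ with each $R_i$ a reversible element of $\SU(n-1)$.

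Then I would assemble the pieces: setting $\Theta_i:=J_i'\oplus R_i\in\U(n,1)$, we have $T=\Theta_1\Theta_2$. Each $\Theta_i$ is itself reversible in $\U(n,1)$, because if $g_i\in\U(n-1)$ realises $g_iR_ig_i^{-1}=R_i^{-1}$, then $(I_{\u}\oplus g_i)\,\Theta_i\,(I_{\u}\oplus g_i)^{-1}=J_i'\oplus R_i^{-1}=\Theta_i^{-1}$, using that $J_i'$ is already an involution. A second application of \thmref{rch}(i), this time inside $\U(n,1)$, expresses each $\Theta_i$ as a product of two involutions, whence $T$ is a product of four.

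The argument is mostly assembly of earlier results; the delicate point is the second step, and the real-eigenvalue hypothesis enters exactly there. Real null eigenvalues force $\mu_+\mu_-=1$, which is precisely what makes the characteristic polynomial of $T|_{\u}$ self-dual and allows $T|_{\u}$ to split into two involutions in $\U(\u)$ all by itself. In the genuinely complex case ($\mu_+\mu_-\ne 1$) the hyperbolic block would need to be coupled with a unimodular scalar factor extracted from $\W$, which is the extra twist handled in the subsequent lemmas.
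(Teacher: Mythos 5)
Your proof is correct and follows essentially the same route as the paper's: decompose $\C^{n,1}$ into the two-dimensional indefinite block spanned by the null eigenvectors and its space-like complement, apply \lemref{lem3} to the complement, assemble two reversible elements of the full isometry group, and finish with \thmref{rch}(i). The only real difference is in how the hyperbolic block is distributed: the paper keeps it intact in one factor (taking $1|_H\oplus r_1$ and $T|_H\oplus r_2$, so both factors lie in $\SU(n,1)$ and the hyperbolic one has eigenvalue $1$), whereas you split it into two involutions of $\U(1,1)$ first, which lands your four involutions in $\U(n,1)$ rather than $\SU(n,1)$ --- still sufficient for the statement as given and for its use in the corollary on $\PU(n,1)$.
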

\begin{proof}
Since $T$ has null eigenvalues real numbers $r$, $r^{-1}$, hence $\C^{n,1}$ has a $T$-invariant decomposition
$$\C^{n,1}=H \oplus \W,$$
where $H=\V_r + \V_{r^{-1}}, ~ \dim \V_r=1=\dim \V_{r^{-1}}$ and $T|_{\W} \in \SU(n-1)$.  
By \lemref{lem3}, $T|_{\W}=r_1 r_2$, where $r_1$ and $r_2$ are reversible elements in $\SU(n-1)$ and are of the form given by \eqnref{r1} and \eqnref{r2}. Let 
$R_1=1|_H \oplus r_1$ and $R_2=T|_H \oplus r_2$. Then $R_1$ and $R_2$ are reversible elements in $\SU(n,1)$. Note that $R_1$ is elliptic and $R_2$ is hyperbolic with an eigenvalue $1$. By \thmref{rch}, it follows that both $R_1$ and $R_2$ can be expressed as a product of two involutions in $\SU(n,1)$. Hence $T$ can be written as a product of four involutions in $\SU(n,1)$. 
\end{proof}
\begin{cor}\label{ch}
A hyperbolic element in $\PU(n,1)$ is a product of a  complex line-reflection and four involutions. 
\end{cor}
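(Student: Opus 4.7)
My plan mirrors \corref{ce} with a complex $1$-reflection in place of a complex $0$-reflection. I would first lift $T$ to $\widetilde T\in \U(n,1)$ and use the conjugacy classification of hyperbolic elements to obtain the $\widetilde T$-invariant orthogonal decomposition $\C^{n,1}=H\oplus\W$, where $H$ is the two-dimensional indefinite subspace of signature $(1,1)$ spanned by the two null eigenvectors of $\widetilde T$ (with eigenvalues $re^{i\theta}$ and $r^{-1}e^{i\theta}$, $r>1$), and $\W$ is the positive-definite orthogonal complement of dimension $n-1$ on which $\widetilde T$ acts unitarily with some unit-modulus eigenvalues $\mu_1,\dots,\mu_{n-1}$.

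Next I would introduce the complex line-reflection $R$ defined by $R|_H=\alpha\,I$ and $R|_\W=I$ for a unit scalar $\alpha$; since $H$ has signature $(1,1)$ and $\W$ is positive-definite of dimension $n-1$, this is a complex $1$-reflection in the sense of \secref{chi}. The product $K:=R^{-1}\widetilde T$ then respects the decomposition, acts on $H$ with null eigenvalues $\alpha^{-1}re^{i\theta}$ and $\alpha^{-1}r^{-1}e^{i\theta}$, and agrees with $\widetilde T|_\W$ on $\W$. The free parameters are $\alpha$ together with an overall unit rescaling $c$ of the lift $\widetilde T$ (which does not change the class in $\PU(n,1)$). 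Taking $\alpha=c\,e^{i\theta}$ makes the null eigenvalues of $K$ the real pair $r,r^{-1}$, and the requirement $\det K=1$ then reduces to the single equation $c^{n-1}\prod_i \mu_i=1$ for the unit scalar $c$, which is solvable whenever $n\geq 2$.

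With such a choice $K\in \SU(n,1)$ is hyperbolic with real null eigenvalues, so the preceding lemma applies and writes $K=i_1 i_2 i_3 i_4$ as a product of four involutions. Hence
\[
T=R\cdot i_1 i_2 i_3 i_4
\]
in $\PU(n,1)$, as claimed. The main obstacle is really the simultaneous choice of $\alpha$ and $c$: one must check that $R$ remains a complex $1$-reflection (which is automatic once $|\alpha|=1$) while at the same time $K$ lands in $\SU(n,1)$ with real null eigenvalues; once that bookkeeping is settled, the substantive input is the previous lemma.
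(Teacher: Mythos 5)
Your proposal is correct and is essentially the paper's own argument: the paper likewise writes $T=DK$ with $D$ (up to conjugacy $\lambda 1_2\oplus 1_{n-1}$) a complex line-reflection and $K\in\SU(n,1)$ hyperbolic with real null eigenvalues, then applies the preceding lemma; you have merely made explicit the scalar bookkeeping that the paper leaves implicit.
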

\begin{proof}
A hyperbolic element $T$ in $\U(n,1)$ can be written as $T=DK$, where $K \in \SU(n,1)$ is a hyperbolic element with real null eigenvalues and $D$, up to conjugacy,  is a diagonal matrix of the form $\lambda 1_2 \oplus 1_{n-1}$. $D$ is clearly a   complex  line-reflection. The result now follow from the above lemma.
\end{proof}
\begin{lemma}\label{vt} 
A vertical-translation in $\PU(n,1)$, $n\geq2$ is a product of four involutions. A non-vertical translation is a product of two involutions.
\end{lemma}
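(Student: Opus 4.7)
The plan is to prove the two parts separately, with the vertical case reduced to the non-vertical one via a Heisenberg group computation.

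For a \emph{non-vertical} translation $T$, the parabolic decomposition \eqnref{pe} gives a $T$-invariant orthogonal splitting $\C^{n,1} = \u \oplus \W$ with $\u$ indefinite of signature $(2,1)$, $T|_{\u}$ unipotent with minimal polynomial $(x-1)^3$, and $T|_{\W} = I$. I would first show $T$ is reversible in $\U(n,1)$ and then invoke \thmref{rch}(i) to conclude that $T$ is a product of two involutions. Reversibility reduces to a statement about $T|_{\u}$ alone, since extending any conjugator on $\u$ by the identity on $\W$ conjugates $T$ in $\U(n,1)$. Working in the stabilizer of the boundary fixed point of $T|_{\u}$, represent $T|_{\u}$ as a Heisenberg element $(z, t) \in \C \times \R$ with $z \neq 0$. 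The candidate conjugator $S$ is the product of the rotation $\mathrm{diag}(1,-1,1) \in \U(\u)$ (which sends $z$ to $-z$) and a Heisenberg translation $(w, 0)$ chosen with $\Im\langle z, w\rangle = -t/2$, which exists because $z \neq 0$. A direct matrix computation in the Siegel model shows that $S$ is an involution in $\U(\u)$ and conjugates $T|_{\u}$ to $(-z, -t) = (T|_{\u})^{-1}$, proving reversibility.

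For a \emph{vertical} translation $T$, I would conjugate into the stabilizer of a boundary point and identify $T$ with a central Heisenberg element $(0, t) \in \C^{n-1} \times \R$. Under the Heisenberg product
$$(z_1, t_1)(z_2, t_2) = \bigl(z_1+z_2,\; t_1 + t_2 + 2\Im\langle z_1, z_2\rangle\bigr),$$
for any nonzero $z \in \C^{n-1}$ (available since $n \geq 2$) and any $t_1, t_2 \in \R$ with $t_1+t_2 = t$, the identity $\langle z, -z\rangle = -\langle z, z\rangle \in \R$ gives
$$(z, t_1)\cdot(-z, t_2) = \bigl(0,\; t_1+t_2\bigr) = T.$$
Each factor has nonzero horizontal component, hence is a non-vertical translation; by the first case each is a product of two involutions, so $T$ is a product of four.

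The main technical obstacle is the explicit production of the involution $S$ in the first case. One must verify that the Heisenberg-plus-rotation candidate actually lies in $\U(\u)$, squares to the identity, and intertwines $T|_{\u}$ with its inverse; this requires careful bookkeeping of the imaginary parts of the Hermitian form in the Siegel model. Once done, the vertical case follows immediately from the Heisenberg multiplication formula, and no separate treatment for low $n$ is needed beyond ensuring $n \geq 2$ so that a nonzero horizontal $z$ exists.
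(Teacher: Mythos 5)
Your argument is correct, but it takes a different route from the paper's at both steps. For the non-vertical case the paper simply cites \cite[Theorem 4.1]{gp} for reversibility and then applies \thmref{rch}(i); you instead reprove reversibility by hand with an explicit conjugator $S$ in the Siegel/Heisenberg picture (the rotation $\mathrm{diag}(1,-1,1)$ composed with a suitable Heisenberg translation), which is a correct, self-contained substitute --- and since your $S$ is itself an involution, $T=S\cdot(ST)$ already exhibits the two involutions, so the appeal to \thmref{rch} is actually optional there. For the vertical case the paper invokes Djokovi\'c--Malzan \cite{dm}, whereas you factor the central Heisenberg element $(0,t)$ as $(z,t_1)(-z,t_2)$ with $z\neq 0$ (available precisely because $n\geq 2$) and apply the non-vertical case to each factor; this elementary factorization makes both the count of four involutions and the hypothesis $n\geq 2$ transparent, at the cost of carrying out the Heisenberg bookkeeping that the paper outsources to the literature. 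One small imprecision: the decomposition \eqnref{pe} is stated in the paper only for non-unipotent parabolics, so for a non-vertical (unipotent) translation you should instead quote the Jordan-form fact (cf.\ \cite[Lemma 3.4.2]{chen}) that $\C^{n,1}$ splits orthogonally into a $T$-invariant indefinite subspace of signature $(2,1)$ on which $T$ is a regular unipotent block and a space-like complement on which $T$ is the identity --- which is exactly the splitting your computation uses.
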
 
\begin{proof}
Let $T$ be a vertical translation. Without loss of generality we may assume $T \in \SU(n,1)$. Then $\C^{n,1}$ has $T$-invariant orthogonal decomposition
$$\C^{n,1}= \W \oplus \W',$$
where $\dim \W=2$ is $T$-indecomposable and $T|_{\W'}=1_{\W'}$. So, without loss of generality we can assume $T \in \SU(1,1)$. 
It follows from the theorem of  Djokovi\'c and Malzan \cite{dm} that the involutory-reflection length of $T$ is bounded by
$$l(T)=\hbox{rank}(1-T)+ \frac{1}{2}(1-{(-1)^{\hbox{rank}(1-T)} \det T)}+ \delta_2,$$ where $\delta_2$ is either $0$ or $2$. Now, clearly rank of $1-T$ is $1$ in this case. So, $l(T)=2+ \delta_2$. But $\delta_2$ can not be zero as in that case, $T$ is reversible, which is not possible by \cite[Theorem 4.1]{gp}. Hence $l(T)$ must be $4$. Consequently, the assertion follows. 

 It follows from \cite[Theorem 4.1]{gp} that a non-vertical translation is reversible. Hence,  using \thmref{rch}, the result follows. 
\end{proof}
\begin{lemma}\label{et}
Let $T$ be an ellipto-translation in $\PU(n,1)$. Then it is a product of  a complex line-reflection and four involutions. 
\end{lemma}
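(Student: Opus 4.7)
The plan is to mimic the approach of \corref{ch} for the hyperbolic case: peel off a complex line-reflection so that the remaining factor splits orthogonally into a manageable indefinite piece plus a unitary piece, express each piece as a product of reversibles, and then invoke \thmref{rch} to convert reversibles into pairs of involutions. Concretely, I would first lift $T$ to $\U(n,1)$ and use the $T$-invariant orthogonal decomposition\eqnref{pe} with $\dim\u=2$ indefinite, $T|_\u$ of minimal polynomial $(x-\lambda)^2$ (so $|\lambda|=1$), and $T|_\W\in\U(n-1)$ semisimple. Taking $D$ to act as $\lambda I$ on $\u$ and as the identity on $\W$ gives a complex line-reflection, and $K=D^{-1}T$ is an element whose restriction to $\u$ is a non-trivial unipotent parabolic in $\U(1,1)$ and whose restriction to $\W$ is $\lambda^{-1}T|_\W\in\U(n-1)$. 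It then suffices to write $K$ as a product of four involutions in $\U(n,1)$.

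Next I would realize $K$ as a product of two reversibles of $\U(n,1)$, one for each block. On $\W$, the construction\eqnref{r1}--\eqnref{r2} from the proof of \lemref{lem3} makes sense verbatim in $\U(n-1)$ and produces reversibles $r_1,r_2\in\U(n-1)$ with $K|_\W=r_1r_2$. On $\u$, the parabolic $K|_\u$ is reversible in $\U(1,1)$ (its only eigenvalue is $1$), so by \thmref{rch}(i) it factors as $i_1i_2$ with $i_1,i_2$ involutions in $\U(1,1)$. Since $K|_\u$ is non-trivial parabolic, neither $i_j$ is central, so each $i_j$ has eigenvalues $\{1,-1\}$, with the two orthogonal eigenlines of opposite type in the indefinite form on $\u$. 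Setting $R_j=i_j\oplus r_j$ yields $R_1R_2=K$.

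The final step is to verify that each $R_j$ is reversible in $\U(n,1)$ and then apply \thmref{rch}(i) once more. By construction, $R_j$ is elliptic, its negative-type eigenvalue equals the negative-type eigenvalue of $i_j$ on $\u$, which is $\pm 1$ and hence self-inverse; the positive-type eigenvalues ($\mp 1$ together with the spectrum of $r_j$) are closed under $\mu\mapsto\mu^{-1}$ because $r_j$ is reversible in $\U(n-1)$. So $R_j$ is reversible, hence strongly reversible, and $R_j=J_{2j-1}J_{2j}$ for involutions $J_{2j-1},J_{2j}\in\U(n,1)$. Combining, $T=DK=D\cdot J_1J_2J_3J_4$, as required. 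The main obstacle is precisely this reversibility check for $R_j$: one must ensure the negative-type eigenvalue is self-inverse, which is why the parabolic block must be decomposed via involutions of $\U(1,1)$ with real eigenvalues $\pm 1$---a decomposition guaranteed by \thmref{rch}(i) but one that would fail if, for instance, one tried to use anti-holomorphic involutions there.
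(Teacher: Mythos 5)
There is a genuine gap at the central step, namely your treatment of the parabolic block. You claim that $K|_{\u}$, the nontrivial unipotent with minimal polynomial $(x-\lambda)^2$ (after removing $\lambda$, minimal polynomial $(x-1)^2$) acting on the two-dimensional indefinite subspace, ``is reversible in $\U(1,1)$ (its only eigenvalue is $1$)'' and hence by \thmref{rch}(i) a product of two involutions of $\U(1,1)$. This is false: having a self-inverse spectrum does not imply reversibility for non-semisimple elements, and this particular element is exactly the vertical-translation type unipotent which is \emph{not} conjugate to its inverse in $\U(1,1)$. Concretely, with a null basis $u,v$ of $\u$, $\langle u,v\rangle=1$, one has $K|_{\u}(u)=u$, $K|_{\u}(v)=v+iu$; any $g\in\U(1,1)$ with $gK|_{\u}g^{-1}=K|_{\u}^{-1}$ must preserve the line $\C u=\mathrm{im}(K|_{\u}-I)$, say $g(u)=\alpha u$, $g(v)=\beta u+\gamma v$, and the conjugation relation forces $\alpha=-\gamma$ while unitarity forces $\alpha\bar\gamma=1$, giving $-|\gamma|^2=1$, a contradiction. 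Equivalently, a product of two involutions is always conjugate to its inverse by one of the factors, and involutions of $\U(1,1)$ are semisimple with eigenvalues $\pm1$, so no such product can be a nontrivial unipotent. This is precisely why the paper's \lemref{vt} resorts to Djokovi\'c--Malzan and obtains only \emph{four} involutions for vertical translations, and why the paper's proof of this lemma decomposes $P|_{\u}$ into four involutions $i_1i_2i_3i_4$ (Djokovi\'c--Malzan) and pairs them with four involutions $r_1r_2r_3r_4$ of the block $P|_{\W}\in\SU(n-1)$ (via \lemref{lem2} and \lemref{lem3}), setting $R_k=i_k\oplus r_k$, instead of trying to write $P$ as a product of two reversibles as in \corref{ch}.

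A secondary problem: you assert that the construction \eqnref{r1}--\eqnref{r2} works ``verbatim'' in $\U(n-1)$ and yields two reversible factors. It does not in general: one of $R_1,R_2$ acquires a single unpaired eigenvalue equal to $\det$ of the block, which is self-inverse only when the determinant is $\pm1$. So, as in the paper, you must first normalize the lift (rescale by $e^{i\theta}$) so that the unitary block lies in $\SU(n-1)$; note also that with your choice of $D$ (identity on $\W$) one has $K|_{\W}=T|_{\W}$, not $\lambda^{-1}T|_{\W}$. These points are fixable, but the failure of two-involution factorization of the unipotent block is not, and it breaks the proposed four-involution count as structured.
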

\begin{proof}
Choose a lift in $\U(n,1)$ such that $T=DP$, where $P$ is a ellipto-translation in $\SU(n,1)$ with null eigenvalue $1$ and, $D$ is elliptic with characteristic polynomial \hbox{$(x-\lambda)^2 (x-1)^{n-1}$}, $|\lambda|=1$. Now, $\C^{n,1}$ has a $P$-invariant decomposition $\C^{n,1}={\mathbb U} \oplus \W$, where $\dim {\mathbb U}=2$, $P|_{{\mathbb U}}$ has minimal polynomial $(x-1)^2$ and $P|_{\W} \in \SU(n-1)$. It follows as above from Djokovi\'c and Malzan's theorem that $P|_{{\mathbb U}}$ is a product $i_1 i_2 i_3 i_4$ of involutions and, by \lemref{lem2} and \lemref{lem3}, $P|_{\W}$ is a product of four involutions $r_1 r_2 r_3 r_4$. Thus $P$ is product of four involutions $R_k=i_k \oplus r_k$ in $\U(n,1)$. Clearly, $D$ is a complex line-reflection. Hence the lemma is proved.
\end{proof}
\begin{cor}
Let $T$ be an ellipto-translation in $\SU(n,1)$ with null eigenvalue $1$. Then $T$ is a product of four involutions in $\U(n,1)$. 
\end{cor}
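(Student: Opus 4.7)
The plan is to carry out, in isolation, the construction used to factor $P$ in the proof of \lemref{et}, since the hypothesis on $T$ here matches the role played by $P$ there. I would begin by invoking the $T$-invariant orthogonal decomposition \eqnref{pe}: $\C^{n,1} = \u \oplus \W$, where $\u$ is indefinite of complex dimension $2$, $T|_{\u}$ has characteristic and minimal polynomial $(x-1)^2$, and $T|_{\W}$ is semisimple. Since $\det T = 1$ and $\det T|_{\u} = 1$, it follows that $T|_{\W} \in \SU(n-1)$.

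Next I would factor the two blocks separately. The subspace $\u$ is isometric to $\C^{1,1}$, and $T|_{\u}$ is a unipotent vertical translation there; the theorem of Djokovi\'c and Malzan \cite{dm} then supplies four involutions $i_1, i_2, i_3, i_4 \in \U(1,1)$ with $T|_{\u} = i_1 i_2 i_3 i_4$. On $\W$, applying \lemref{lem3} writes $T|_{\W} = S_1 S_2$ as a product of two reversible elements of $\SU(n-1)$. Each $S_j$ is then a product of two involutions in $\U(n-1)$: by \lemref{lem1} when $n - 1 \not\equiv 2 \pmod 4$, and by \lemref{lem2} when $n-1 \equiv 2 \pmod 4$. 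This gives $T|_{\W} = r_1 r_2 r_3 r_4$ with each $r_k$ an involution in $\U(n-1)$.

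Finally, setting $R_k = i_k \oplus r_k$ for $k = 1, 2, 3, 4$ yields four involutions in $\U(n,1)$ with $R_1 R_2 R_3 R_4 = T$, since the block decomposition is $T$-invariant and each factor respects it. The mild subtlety is that when $n - 1 \equiv 2 \pmod 4$ the involutions $r_k$ may have determinant $-1$, and the $i_k$ likewise need not have determinant $1$; this is precisely why the conclusion is stated in $\U(n,1)$ rather than in $\SU(n,1)$, and no determinant matching is required. I do not foresee an essential obstacle, since every ingredient is already in place in the preceding lemmas; the argument is purely organizational.
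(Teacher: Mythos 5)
Your proposal is correct and is essentially the paper's own argument: the corollary is exactly the factorization of the parabolic factor $P$ carried out inside the proof of \lemref{et} (decompose via \eqnref{pe}, handle the unipotent block of minimal polynomial $(x-1)^2$ by Djokovi\'c--Malzan, handle $T|_{\W}\in\SU(n-1)$ by \lemref{lem2} and \lemref{lem3}, and take blockwise sums $i_k\oplus r_k$). Your added care about the mod-$4$ case distinction and the determinant issue, resolved by stating the conclusion in $\U(n,1)$, matches the paper's intent.
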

\begin{lemma}\label{ep}
Let $T$ be an ellipto-parabolic in $\PU(n,1)$. Then it is a product of a complex plane-reflection and four involutions. 
\end{lemma}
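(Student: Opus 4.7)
The plan is to adapt the proof of \lemref{et} to the ellipto-parabolic case ($k=3$). By \eqnref{pe}, fix the $T$-invariant orthogonal decomposition $\C^{n,1} = \u \oplus \W$, where $\u$ has dimension $3$ and signature $(2,1)$, $T|_\u$ has minimal polynomial $(x-\lambda)^3$, and $\W$ is space-like of dimension $n-2$ with $T|_\W \in \U(n-2)$ elliptic. Writing $T|_\u = \lambda N|_\u$ for the Jordan decomposition on $\u$ (so that $N|_\u$ is unipotent with minimal polynomial $(x-1)^3$), I would scale the lift by a phase $e^{i\theta}$, set $\mu = e^{i\theta}\lambda$, and put $D = \mu I_3 \oplus I_{n-2}$, so that $P := D^{-1}(e^{i\theta}T)$ restricts to $N|_\u$ on $\u$ and to $e^{i\theta} T|_\W$ on $\W$. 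Requiring $\det P = 1$ reduces to the single equation $e^{i(n-2)\theta} = \lambda^3/\det T$, which is solvable on the unit circle precisely because $n > 2$, and among the solutions one can take $\mu \ne 1$ so that $D$ is genuinely a complex plane-reflection. This yields $T = DP$ with $D$ a complex plane-reflection, $P \in \SU(n,1)$ ellipto-parabolic, $P|_\u$ a non-vertical translation in $\U(2,1)$, and $P|_\W \in \SU(n-2)$.

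Next I would decompose $P$ as a product of four involutions. By \lemref{vt}, the non-vertical translation $P|_\u = i_1 i_2$ for involutions $i_1, i_2 \in \U(2,1)$. By \lemref{lem3} together with \lemref{lem1} (when $n-2 \not\equiv 2 \pmod 4$) or \lemref{lem2} (when $n-2 \equiv 2 \pmod 4$), one may write $P|_\W = r_1 r_2 r_3 r_4$ with each $r_j \in \U(n-2)$ an involution. Setting $R_1 = i_1 \oplus r_1$, $R_2 = i_2 \oplus r_2$, $R_3 = I_\u \oplus r_3$, and $R_4 = I_\u \oplus r_4$, each $R_k$ is an involution in $\U(n,1)$, and $R_1 R_2 R_3 R_4 = (i_1 i_2) \oplus (r_1 r_2 r_3 r_4) = P$. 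Thus $T = D R_1 R_2 R_3 R_4$, exhibiting $T$ as a product of a complex plane-reflection and four involutions.

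The main obstacle is the determinant bookkeeping in the lift-selection step: the phase $\theta$ must be chosen so that $P$ lands in $\SU(n,1)$ while $D$ simultaneously remains a bona fide complex plane-reflection. This is precisely where the hypothesis $n > 2$ enters, because it is what makes $\theta \mapsto e^{i(n-2)\theta}$ surjective onto the unit circle. Once the right lift is selected, the remainder of the proof is a transparent combination of \lemref{vt} on $\u$ with \lemref{lem1}, \lemref{lem2}, and \lemref{lem3} on $\W$, in the same spirit as the ellipto-translation case.
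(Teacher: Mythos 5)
Your proof is correct and is essentially the paper's own argument: factor a suitably scaled lift as $T=DP$ with $D$ a plane-reflection and $P\in\SU(n,1)$ ellipto-parabolic with null eigenvalue $1$, split $\C^{n,1}=\u\oplus\W$, treat $P|_{\u}$ by \lemref{vt} and $P|_{\W}$ by the $\SU(n-2)$ results, and recombine. The only deviations are harmless: you make explicit the determinant bookkeeping that the paper merely asserts when choosing the lift, and you assemble the four involutions by writing $P|_{\W}$ as four involutions of $\U(n-2)$ padded by the identity on $\u$, whereas the paper pairs $i_1,i_2$ with the two reversible factors $r_1,r_2$ to get two reversible elements of $\U(n,1)$ and then invokes \thmref{rch}; also, in the boundary case $n=3$ the unique admissible phase may force $\mu=1$, in which case $D$ is trivial and $T$ is outright a product of four involutions, consistent with the ``at most'' formulation of \thmref{th2}.
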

\begin{proof}
Choose a lift, again denoted by $T$,  in $\U(n,1)$ such that $T=KP$, where $K$ is elliptic with characteristic polynomial $(x-\lambda)^3(x-1)^{n-2}$ and $P \in \SU(n,1)$ is a ellipto-parabolic with null eigenvalue $1$. Then $\C^{n,1}$ has a $P$-invariant decomposition
$\C^{n,1}={\mathbb U} \oplus \W$, where $\dim \u=3$, $P|_{{\mathbb U}}$ has minimal polynomial $(x-1)^3$ and, $\dim \W=n-2$, $P|_{\W} \in \SU(n-2)$. Now by \lemref{vt}, $P|_{{\mathbb U}}=i_1 i_2$, where $i_1$, $i_2$ are involutions and by \lemref{lem3}, $P|_{\W}$ is a product of two reversible elements $P|_{\W}=r_1 r_2$. Let $R_1=i_1 \oplus r_1$ and $R_2=i_2 \oplus r_2$. Then $P=R_1 R_2$. Note that, $R_1$ and $R_2$ are reversible elements in $\U(n,1)$ and hence by \thmref{rch}, each of them is a product of two involutions. The elliptic element $K$ is clearly a complex reflection that fixes a totally geodesic two dimensional subspace of $\h^n$.  This completes the proof.\end{proof}
\begin{cor}
Let $T$ be an ellipto-parabolic in $\SU(n,1)$ with null eigenvalue $1$. Then $T$ is a product of four involutions. 
\end{cor}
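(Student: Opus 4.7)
The plan is to observe that the required four-involution factorization of $T$ has essentially already been assembled inside the proof of \lemref{ep}: the corollary is precisely the decomposition of the ellipto-parabolic factor $P$ there, now taken as $T$ itself. Since we start directly from $T \in \SU(n,1)$ with null eigenvalue $1$, there is no complex plane-reflection $K$ to peel off, and the argument applied inside \lemref{ep} to the $P$-factor transfers verbatim.

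Concretely, I would begin with the $T$-invariant orthogonal decomposition \eqnref{pe}, $\C^{n,1} = \u \oplus \W$, where $\dim \u = 3$, the restriction $T|_{\u}$ has minimal polynomial $(x-1)^3$ (so $T|_{\u}$ is a non-vertical translation on the indefinite $3$-dimensional subspace $\u$), and $T|_{\W} \in \SU(n-2)$. Apply \lemref{vt} to write $T|_{\u} = i_1 i_2$ as a product of two involutions in $\U(\u)$, and apply \lemref{lem3} to write $T|_{\W} = r_1 r_2$ as a product of two reversible elements in $\SU(n-2)$.

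Next, set $R_j := i_j \oplus r_j \in \U(n,1)$ for $j = 1, 2$, so that $T = R_1 R_2$. Each $R_j$ is reversible in $\U(n,1)$: if $\sigma_j \in \U(\u)$ conjugates $i_j$ to $i_j^{-1}$ and $\tau_j \in \U(\W)$ conjugates $r_j$ to $r_j^{-1}$, then $\sigma_j \oplus \tau_j$ conjugates $R_j$ to $R_j^{-1}$. By \thmref{rch}(i), reversibility in $\U(n,1)$ is equivalent to strong reversibility, so each $R_j$ is itself a product of two involutions in $\U(n,1)$, and therefore $T$ is a product of four involutions.

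There is essentially no serious obstacle here; the only verification worth making explicit is that a direct sum of reversible transformations on orthogonal invariant subspaces is reversible in the ambient unitary group, which is immediate from the block-diagonal construction of the conjugator above. Every other ingredient has already been established: the orthogonal splitting in \eqnref{pe}, the two-involution factorization of non-vertical translations in \lemref{vt}, the reversible-pair factorization in $\SU(n-2)$ in \lemref{lem3}, and the equivalence of reversibility and strong reversibility in \thmref{rch}(i).
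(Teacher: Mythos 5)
Your proposal is correct and is essentially the paper's own argument: the corollary is left without explicit proof precisely because it is the decomposition of the factor $P$ carried out inside the proof of \lemref{ep}, namely splitting $\C^{n,1}=\u\oplus\W$, writing $T|_{\u}=i_1i_2$ via \lemref{vt} and $T|_{\W}=r_1r_2$ via \lemref{lem3}, and then applying \thmref{rch}(i) to the reversible block sums $R_j=i_j\oplus r_j$. Your added remark verifying that an orthogonal sum of reversible blocks is reversible is a harmless explicit check of a step the paper takes for granted.
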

\subsubsection*{\bf Proof of \thmref{th2}} $\;$ 

\medskip Combining \corref{ce}, \corref{ch}, \lemref{vt}, \lemref{et} and \lemref{ep}, we have \thmref{th2}. 
\section{Product of anti-holomorphic involutions}\label{choi}
We have seen $\PU(n,1)$ as the group of isometries of $\h^n$. In the ball model, an element $A \in \PU(n,1)$ is an holomorphic isometry of $\h^n$. However, the \emph{real reflection} on $\C^{n,1}$, given by $c: v \mapsto \overline v$, also induces an isometry. The group $\PU(n,1)$, along with $c$ generate the full group $\I(\h^n)$ of isometries of $\h^n$. Thus $\PU(n,1)$ is an index two subgroup of $\I(\h^n)$. An anti-holomorphic isometry is given by $A\circ c$, where $A \in \U(n,1)$. For simplicity we write $A \circ c(v)=A \bar v$. We give a short proof of the following well-known result. 
 \begin{theorem}
Every holomorphic isometry of $\h^n$ is a product of two anti-holomorphic involutions.
\end{theorem}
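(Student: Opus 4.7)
The plan is first to reduce the statement to finding, for each holomorphic isometry $T$, a single anti-holomorphic involution that reverses it, then to construct such an involution case by case using the conjugacy classification from the preliminaries. An anti-holomorphic isometry has the form $\sigma(v) = S\bar v$ with $S \in \U(n,1)$, and $\sigma^2 = \mathrm{id}$ iff $S\bar S = I$. If $T = \sigma_1 \sigma_2$ with each $\sigma_i$ an anti-holomorphic involution, then $\sigma_1 T \sigma_1 = T^{-1}$; conversely, given an anti-holomorphic involution $\sigma$ with $\sigma T \sigma = T^{-1}$, the element $\sigma T$ is anti-holomorphic and $(\sigma T)^2 = \sigma \cdot T\sigma T = \sigma \cdot \sigma T^{-1} T = \mathrm{id}$, so $T = \sigma \cdot (\sigma T)$ is the required factorization. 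Writing $\sigma(v) = S\bar v$, the reversal condition becomes $S\bar T S^{-1} = T^{-1}$ with $S \in \U(n,1)$ and $S\bar S = I$.

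I would then construct such $S$ using the classification. If $T$ is elliptic then $T = PDP^{-1}$ for some $P \in \U(n,1)$ with $D$ diagonal of unit-modulus entries (elliptic elements are semisimple and diagonalize in an orthonormal basis of $\C^{n,1}$); because $\bar D = D^{-1}$, the choice $S = P\bar P^{-1}$ gives $S\bar S = I$ and $S\bar T S^{-1} = PD^{-1}P^{-1} = T^{-1}$. If $T$ is hyperbolic I use the decomposition $\C^{n,1} = H \oplus \W$ where $H$ is the $2$-dimensional indefinite subspace containing the null eigenvectors and $T|_\W \in \U(n-1)$; the elliptic argument handles $\W$, and on $H$, in an orthonormal basis, $T|_H = e^{i\theta} B$ with $B$ a real symmetric boost, whence $S|_H = J|_H$ (the restriction of the form matrix) satisfies $J|_H B J|_H = B^{-1}$ and hence $S|_H \overline{T|_H} S|_H^{-1} = (T|_H)^{-1}$. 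If $T$ is parabolic I use the decomposition $\C^{n,1} = \u \oplus \W$ of \eqnref{pe}, handling $\W$ as before; in the $2$-dimensional cases (vertical translation, ellipto-translation) a direct check in the standard orthonormal basis shows $\overline{T|_\u} = (T|_\u)^{-1}$, so $S|_\u = I$ suffices.

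The hardest step is the parabolic case when $\dim \u = 3$ (non-vertical translation, ellipto-parabolic): here $T|_\u$ has a size-three Jordan block and there is no obvious coninvolutory intertwiner. In the Heisenberg-like null basis of $\u$, $T|_\u$ is upper triangular determined by parameters $(b,\tau) \in \C \times \R$; a computation shows $\overline{T|_\u}$ corresponds to $(\bar b, -\tau)$ while $(T|_\u)^{-1}$ corresponds to $(-b, -\tau)$, and the goal is to exhibit an explicit coninvolutory $S|_\u \in \U(2,1)$ effecting this conjugation. Once $S$ has been assembled on all of $\C^{n,1}$, the anti-holomorphic involutions $\sigma$ and $\sigma T$ provide the desired decomposition of $T$.
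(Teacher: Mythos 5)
Your reduction is sound: $T=\sigma_1\sigma_2$ with $\sigma_i$ anti-holomorphic involutions if and only if some anti-holomorphic involution $\sigma$ reverses $T$, and writing $\sigma(v)=S\bar v$ this becomes $S\bar S=I$, $S\overline TS^{-1}=T^{-1}$ with $S\in\U(n,1)$. Your choices for the easy blocks also check out: $S=P\bar P^{-1}$ for elliptic $T=PDP^{-1}$, $S|_H=\mathrm{diag}(-1,1)$ for the hyperbolic plane, and $S|_{\u}=I$ for the two-dimensional unipotent and ellipto-translation blocks (where indeed $\overline{T|_\u}=(T|_\u)^{-1}$ in a basis with real Gram matrix; you should state explicitly that the bases you conjugate coordinates in have real Gram matrices, since otherwise coordinate conjugation is not an isometry). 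The genuine gap is the case $\dim\u=3$, i.e.\ the non-vertical translations and the ellipto-parabolics: you correctly identify that $\overline{T|_\u}$ corresponds to $(\bar b,-\tau)$ while $(T|_\u)^{-1}$ corresponds to $(-b,-\tau)$, but then you only state ``the goal is to exhibit an explicit coninvolutory $S|_\u\in\U(2,1)$'' without producing it. Since this is the one case with no eigenvector decomposition available, it is exactly the crux of the theorem, and as written your argument does not prove it.

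The gap can be closed inside your own framework: in the null basis $\{u,e,v\}$ with real Gram matrix, the Heisenberg rotation $S|_\u=\mathrm{diag}(1,-b/\bar b,1)$ is unitary, satisfies $S\bar S=I$ because $|-b/\bar b|=1$, and conjugates the translation with parameters $(\bar b,-\tau)$ to the one with parameters $(-b,-\tau)$; the same $S$ handles the ellipto-parabolic block $\lambda\cdot(\hbox{unipotent})$ since the unimodular scalar passes through the computation. For comparison, the paper takes a different route at this point: for minimal polynomial $(x-1)^3$ it quotes \cite[Theorem 4.1]{gp}, which expresses $T$ as a product of two holomorphic involutions that may be chosen with real entries, and then composes each with the real reflection $c$; on eigenspaces and on the two-dimensional block it writes the two anti-holomorphic involutions down directly ($\alpha(v)=\bar v$, $\beta(v)=\lambda_i\bar v$, resp.\ $\mu(w)=T\bar w$, $\nu(w)=\bar w$) instead of passing through a reversing involution. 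Either of these completions is fine, but one of them must actually be carried out before your proposal is a proof.
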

\begin{proof} Let $T \in \U(n,1)$ be elliptic. Then $\C^{n,1}$ has a $T$-invariant decomposition into eigenspaces
$$\C^{n,1}=\V_{\lambda_1} \oplus \V_{\lambda_2} \oplus \cdots \oplus \V_{\lambda_k},$$
where $\lambda_i$ are the eigenvalues, $|\lambda_i|=1$ and $\V_{\lambda_1}$ is time-like. We define involutions $\alpha$ and $\beta$ on $\C^{n,1}$ by defining it on each of the eigenspaces. For $v \in \V_{\lambda}$, define $\alpha|_{\V_{\lambda_i}}(v)=\bar v$ and $\beta|_{\V_{\lambda_i}}(v)=\lambda_i \bar v$. Then $T=\beta \alpha$. 

Let $T \in \U(n,1)$ be hyperbolic. In this case also $T$ has a decomposition into eigenspaces and, 
by defining $\alpha$ and $\beta$ similarly as in the elliptic case, it is possible to write $T=\beta \alpha$.

Let $T \in \U(n,1)$ be a unipotent element. Then it has a minimal polynomial $(x-1)^2$ or $(x-1)^3$. Suppose $T$ has minimal polynomial $(x-1)^2$. Up to conjugacy, we can choose $T$, cf. \cite[Lemma 3.4.2]{chen}, such that for null vectors $u$ and $v$, $T|_{\u}$ has the following form with respect to basis $\{u, v\}$: 
$$
T|_{\u}=\begin{pmatrix} 1 & i \\ 0 & 1\end{pmatrix},$$
where $\u$ is the non-degenerate $T$-invariant subspace of $\C^{n,1}$ generated by $u, v$. 
The restriction of $\langle\cdot, \cdot\rangle$ to $\u$ has 
signature $(1,1)$ and $T|_{\u^{\perp}}$ is the identity map. 

For $w \in \u$,  define $\mu(w)= T \bar w$ and $\nu(w)=\bar w$. Hence $\mu^2(w)=T|_{\u} \overline T|_{\u}~w=w$. Thus, $\mu$ and $\nu$ are involutions and  
$T|_{\u}=\nu \mu$. Extending $\mu$ and $\nu$ to the whole of $\C^{n,1}$ by composing the map $c$ on $\u^{\perp}$, we have the required involution. 

If $T$ has minimal polynomial $(x-1)^3$, then it follows from \cite[Theorem 4.1]{gp} that $T$ is a product of two involutions. Further, up to conjugacy, the involutions may be chosen to be elements in ${\rm O}(n,1)$. Hence those involutions can be extended easily to anti-holomorphic ones by adjoining the real reflection $c$. 

When $T$ is non-unipotent, then we can choose a lift in $\U(n,1)$ such that the null eigenvalue is $1$ and consequently, $\C^{n,1}$ has a decomposition $\C^{n,1}=\u \oplus \W$ as in \eqnref{pe}.  Accordingly, we can construct anti-holomorphic involutions on each of these subspaces as above. The desired involutions are obtained by taking orthogonal sum of them. 

This proves the result. \end{proof}

 \begin{cor}
  Every holomorphic isometry of $\h^n$ is a commutator in the isometry group of $\h^n$. 
 \end{cor}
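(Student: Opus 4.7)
The plan is to combine the preceding theorem with the elementary identity that, for any involutions $\mu,\nu$, one has $(\mu\nu)^2=[\mu,\nu]$ (since $\mu^{-1}=\mu$, $\nu^{-1}=\nu$). Hence, if I can find a \emph{holomorphic} square root $S$ of $T$, i.e.\ $S\in\U(n,1)$ with $S^2=T$, the preceding theorem gives $S=\mu\nu$ with $\mu,\nu$ anti-holomorphic involutions, and then
\[
T \;=\; S^2 \;=\; \mu\nu\mu\nu \;=\; \mu\,\nu\,\mu^{-1}\,\nu^{-1} \;=\; [\mu,\nu]
\]
exhibits $T$ as a commutator in $\I(\h^n)$. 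So the entire task reduces to producing $S\in\U(n,1)$ with $S^2=T$, which I would do case by case using the conjugacy classification recalled in Section~2.

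For the semisimple cases (elliptic and hyperbolic), $\C^{n,1}$ splits as an orthogonal direct sum of the eigenspaces of $T$. Choosing any square root of each eigenvalue and letting $S$ act as those scalars on the same eigenspaces yields $S\in\U(n,1)$. For the hyperbolic null pair $re^{i\theta},r^{-1}e^{i\theta}$ on the indefinite two-dimensional block, the choices $\sqrt{r}\,e^{i\theta/2}$ and $\sqrt{r^{-1}}\,e^{i\theta/2}$ manifestly preserve $\langle\cdot,\cdot\rangle$ restricted to that block because $\sqrt{r}\cdot\sqrt{r^{-1}}=1$.

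For the parabolic case I would use the Jordan decomposition $T=AN$ with $A$ elliptic semisimple, $N$ unipotent, and $AN=NA$. Set $A^{1/2}\in\U(n,1)$ as in the semisimple case. For the unipotent factor, write $N=I+X$ with $X$ nilpotent; then $\log N=X-X^2/2+X^3/3-\cdots$ is a finite sum. Differentiating $N^*JN=J$ and using $\log(N^*)=(\log N)^*$ shows $(\log N)^*=-J(\log N)J$, i.e.\ $\log N\in\mathfrak{u}(n,1)$. Therefore $N^{1/2}:=\exp\bigl(\tfrac12\log N\bigr)\in\U(n,1)$, and since $A^{1/2}$ and $N^{1/2}$ commute, $S:=A^{1/2}N^{1/2}$ satisfies $S^2=T$. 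In the concrete low-dimensional unipotent blocks this is even more explicit: for minimal polynomial $(x-1)^2$ the element $I+\tfrac12(T-I)$ is already a square root, and for $(x-1)^3$ one truncates to $I+\tfrac12(T-I)-\tfrac18(T-I)^2$.

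The main technical obstacle is the unipotent step, specifically checking that $\log N$ belongs to $\mathfrak{u}(n,1)$; once this is in hand, the assembly into a commutator via the identity $(\mu\nu)^2=[\mu,\nu]$ and the preceding theorem is immediate.
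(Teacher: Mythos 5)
Your proposal is correct and follows the paper's own argument: the paper likewise takes a square root $B\in\U(n,1)$ of the given isometry, writes $B=\alpha\beta$ with $\alpha,\beta$ anti-holomorphic involutions via the preceding theorem, and concludes $T=B^2=[\alpha,\beta]$. The only difference is that you spell out the existence of the square root (eigenspace scalars in the semisimple case, Jordan decomposition and $\exp\bigl(\tfrac12\log N\bigr)$ in the parabolic case), which the paper dismisses as ``easy to see''; your verification is sound.
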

 \begin{proof}
  Let $A\in \U(n,1)$. It is easy to see that there is $B$ in $\U(n,1)$ such that $B^2=A$. By the above theorem $B=\alpha\beta$ where $\alpha$ and $\beta$ are anti-holomorphic involutions. Then $A=[\alpha,\beta]$.
 \end{proof}

\subsubsection*{Acknowledgements}  The authors thank Julien Paupert and  Pierre Will for their comments and suggestions on a first draft of this paper and for sharing their investigations. Thanks also to Anupam Singh for comments. 

 This work is a part of the second named author's MS dissertation at IISER Mohali. She would like to express gratitude to IISER Mohali  for providing support throughout her BS-MS course there.  She thanks John Parker for many discussions during an NCM-workshop at IISER Mohali around December 2014.

\end{document}